\documentclass[11pt]{amsart}
\usepackage{tikz-cd}
\usepackage[utf8]{inputenc}
\usepackage{dirtytalk}
\usepackage{amsfonts}
\usepackage{graphicx}
\usepackage{xcolor}
\usepackage{hyperref}
\hypersetup{
  colorlinks=true,
  linkcolor=magenta,
  citecolor=magenta,
  urlcolor=magenta
}
\usepackage{amsmath, amsthm, amssymb, dsfont}
\usepackage{epsfig}
\usepackage{array}
\usepackage{amsrefs}
\usepackage{enumitem}
\usepackage{amssymb}
\usepackage{graphicx}
\usepackage{amsxtra}
\usepackage{mathbbol}
\usepackage{comment}
\usepackage{etoolbox}
\usepackage{amstext}
\usepackage{mathtools}
 \usepackage[normalem]{ulem}
\usepackage{microtype}
\DeclareSymbolFontAlphabet{\amsmathbb}{AMSb}
\numberwithin{equation}{section}
\newtheorem*{theorem*}{Theorem}
\newtheorem{theorem}{Theorem}[section]
\newtheorem{lemma}[theorem]{Lemma}
\newtheorem{remark}[theorem]{Remark}
\newtheorem{corollary}[theorem]{Corollary}
\newtheorem{proposition}[theorem]{Proposition}
\newtheorem{definition}[theorem]{Definition}
\newtheorem{example}[theorem]{Example}

\newcommand{\Prob}{\operatorname{Prob}}

\newcommand{\wstar}{\mathrm{w}^{*}}
\newcommand{\SE}{\mathsf{S}_{E}}

\newcommand{\id}{\mathrm{id}}

\newcommand{\cst}{\ifmmode\mathrm{C}^*\else{$\mathrm{C}^*$}\fi}

\newtheorem{maintheorem}{Theorem}

\def\Prob{\mathrm{Prob}}

\allowdisplaybreaks

\title[Generalized Powers averaging]{Unique equivariant pseudo expectation and generalized powers averaging for crossed product}

\author[T. Amrutam]{Tattwamasi Amrutam}
\address{Institute of Mathematics of the Polish Academy of Sciences, ul. Sniadeckich 8, 00-656, Warszawa, Poland}
\email{tattwamasiamrutam@gmail.com}
\thanks{T.A. is supported by National Science Centre, Poland Sonata, grant number 2025/59/D/ST1/03117. F.K. is partially supported by a grant from the IMU-CDC and the Simons Foundation, and by a grant from IPM (No. 1404460315). Z.N. is partially supported by a grant from IPM (No. 1404370053).}

\author[F. Khosravi]{Fatemeh Khosravi}
\address{Department of Pure Mathematics\\
Faculty of Mathematics and Statistics\\
University of Isfahan\\
Isfahan 81746-73441\\
Iran, And School of Mathematics\\
Institute for Research in Fundamental Sciences (IPM)\\
P. O. Box 19395-5746\\
Tehran\\
Iran}
\email{f.khosravi@mcs.ui.ac.ir / f.khosravi@ipm.ir}

\author[Z. Naghavi]{Zahra Naghavi}
\address{Department of Mathematics, Faculty of Mathematical Sciences, Alzahra University, Tehran, Iran, and School of Mathematics, Institute for Research in Fundamental Sciences (IPM), P.O. Box: 19395-5746, Tehran, Iran
}
\email{naghavi.zahra@gmail.com / z.naghavi@alzahra.ac.ir}

\date{}
\subjclass[2020]{46L55, 46L07}
\keywords{Reduced crossed products, Powers averaging, equivariant injective envelopes, pseudo-expectations}

\begin{document}
\begin{abstract}
Let $\Gamma$ be a countable discrete group and let $A$ be a
unital $\Gamma$-simple $\Gamma$-$C^*$-algebra. We prove that
$A\rtimes_r\Gamma$ has the generalized Powers
averaging property if and only if the canonical map
$A\rtimes_r\Gamma\to I_\Gamma(A)$ is the unique
$\Gamma$-equivariant pseudo-expectation. We also show that
this averaging property lifts to
$I_\Gamma(A)\rtimes_r\Gamma$, with the averaging coefficients
still belonging to $A$. 
\end{abstract}
\maketitle
\section{Introduction}
\label{sec:introduction}

In~\cite{Powers1975}, Powers proved that the reduced group
$C^*$-algebra of the free group on two generators is simple and
has a unique tracial state. The main ingredient was an averaging
argument which showed that an element with zero canonical trace can be made small
in norm by taking a finite average of its conjugates by group
unitaries. For many years, extensions of this argument, often
formulated through the Powers property (or its variants) and related geometric
criteria, provided the main approach to proving $C^*$-simplicity.
We refer to Bekka--Cowling--de la Harpe~\cite{BekkaCowlingDeLaHarpe1994}
for such developments and to de la Harpe's
survey~\cite{DeLaHarpe2007} for the history of the subject.

A turning point came with the work of
Kalantar--Kennedy~\cite{KalantarKennedy2017}. They proved that a
discrete group $\Gamma$ is $C^*$-simple if and only if its action
on the Furstenberg boundary $\partial_F\Gamma$~\cite{glasner2006proximal} is topologically
free. On this topological space, topological freeness is equivalent to
freeness. This brought boundary dynamics into the study of
simplicity of reduced group $C^*$-algebras. Subsequently,
Haagerup~\cite{Haagerup2016} and Kennedy~\cite{Kennedy2020}
independently proved that $C^*$-simplicity is equivalent to
Powers averaging. Thus the averaging property that had originally
served as a method for proving simplicity became a
characterization of simplicity itself.

These results also belong to the broader study of averaging by
unitary conjugation, going back to Dixmier~\cite{Dixmier1949}.
Powers averaging restricts the conjugating unitaries to those
coming from the group. More recently, averaging appears in Robert's work on selfless
$C^*$-algebras~\cite{Robert2025}, where selflessness yields a uniform
Dixmier averaging property. Averaging also gives information about intermediate
$C^*$-algebras. Amrutam--Kalantar~\cite{AmrutamKalantar2020}
proved that, for a countable $C^*$-simple group $\Gamma$ and a
minimal compact $\Gamma$-space $X$, every intermediate
$C^*$-algebra between $C_r^*(\Gamma)$ and
$C(X)\rtimes_r\Gamma$ is simple. Such inclusions are
$C^*$-irreducible in the terminology of
R{\o}rdam~\cite{Rordam2023}.

It is natural to ask how these ideas extend to reduced crossed
products. For a minimal action of a countable discrete group $\Gamma$
on a compact Hausdorff space $X$, Kawabe~\cite{Kawabe2017} characterized simplicity of
$C(X)\rtimes_r\Gamma$ by freeness of the action on the spectrum
of $I_\Gamma(C(X))$. This is the corresponding generalized
Furstenberg boundary (see also the works of Naghavi~\cite{naghavi2020furstenberg} and Behrouzi-Naghavi ~\cite{behrouzi2025faithful}). For an arbitrary group, however, simplicity of
$C(X)\rtimes_r\Gamma$ cannot in general be characterized using
only scalar averages of group conjugations. Such averaging,
applied to the canonical copy of $C_r^*(\Gamma)$, would already
force $\Gamma$ to be $C^*$-simple~\cite{bryder2018reduced}. The coefficients in $C(X)$
must therefore enter the averaging procedure.

Amrutam--Ursu~\cite{AmrutamUrsu2022} (also see \cite{amrutam2023powers}) introduced generalized
probability measures with coefficients in $C(X)$ and proved
that the resulting generalized Powers averaging property
characterizes simplicity of $C(X)\rtimes_r\Gamma$ for minimal
actions of countable discrete groups. They also established a relative version of averaging
for equivariant inclusions of commutative coefficient algebras,
leading to simplicity of all intermediate $C^*$-algebras.

The noncommutative case is more subtle. For a unital $\Gamma$-simple $C^*$-algebra $A$, simplicity of
$A\rtimes_r\Gamma$ is characterized by faithfulness of every
equivariant pseudo-expectation~\cite{KS19}. Pseudo-expectations
were introduced by Pitts~\cite{Pitts2017} (and studied further
by Pitts--Zarikian~\cite{PittsZarikian2015}, and  Zarikian~\cites{zarikian2017unique, Zarikian2019}). In the equivariant setting, a
pseudo-expectation is a $\Gamma$-equivariant unital completely
positive map from $A\rtimes_r\Gamma$ to $I_\Gamma(A)$ which
fixes $A$ pointwise. Whether simplicity
alone forces uniqueness is a separate question (see also the discussion in~\cite{GU25}). We take this uniqueness property
as our starting point and ask whether it gives a
 generalized Powers averaging property.

We use the same form of averaging used in \cite{AmrutamUrsu2022}\footnote{Ozawa, in an unpublished note has shown that the generalized averaging appearing in \cite{AmrutamUrsu2022} is equivalent to intersection property for commutative crossed products.}, now allowing noncommuting
coefficients. We call the resulting property the
\emph{generalized Powers averaging property}
(abbreviated GPAP); it requires that the averaging can be carried out
for every $y\in A\rtimes_r\Gamma$ (see
Definition~\ref{def:ncpap}). Using the noncommutative boundary
theory developed by Kennedy--Schafhauser~\cite{KS19}, our first main result identifies
this property with uniqueness of the equivariant
pseudo-expectation.
\begin{maintheorem}
\label{thm:mainaveraging}
Let $\Gamma$ be a countable discrete group and let $A$ be a
unital $\Gamma$-simple $\Gamma$-$C^*$-algebra. Then the
following conditions are equivalent:
\begin{enumerate}[label=\textup{(\roman*)},leftmargin=*]
\item $A\rtimes_r\Gamma$ has GPAP.
\item The canonical map $E_A:A\rtimes_r\Gamma\to I_\Gamma(A)$
is the unique $\Gamma$-equivariant pseudo-expectation.
\item The canonical conditional expectation
$E_{I_\Gamma(A)}:I_\Gamma(A)\rtimes_r\Gamma\to I_\Gamma(A)$
is the unique $\Gamma$-equivariant conditional expectation.
\end{enumerate}
\end{maintheorem}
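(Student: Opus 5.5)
The plan is to prove (ii)$\Leftrightarrow$(iii) by extension and rigidity, (i)$\Rightarrow$(ii) by applying a pseudo-expectation to the averages, and (iii)$\Rightarrow$(i) by a separation argument. Throughout, I read GPAP (Definition~\ref{def:ncpap}) as follows: for every $y\in A\rtimes_r\Gamma$ and $\varepsilon>0$ there are $a_1,\dots,a_n\in A$ with $\sum_j a_j^*a_j=1$ and $s_1,\dots,s_n\in\Gamma$ such that
\[
\Bigl\|\sum_j a_j^*\lambda_{s_j}\bigl(y-E_A(y)\bigr)\lambda_{s_j}^*a_j\Bigr\|<\varepsilon ,
\]
computed in $I_\Gamma(A)\rtimes_r\Gamma$.

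\textbf{(ii)$\Leftrightarrow$(iii).} Let $\varphi$ be a $\Gamma$-equivariant pseudo-expectation on $A\rtimes_r\Gamma$.
\begin{itemize}
\item By $\Gamma$-injectivity of $I_\Gamma(A)$, $\varphi$ extends to a $\Gamma$-equivariant ucp map $\psi:I_\Gamma(A)\rtimes_r\Gamma\to I_\Gamma(A)$.
\item The restriction $\psi|_{I_\Gamma(A)}$ fixes $A$, so by $\Gamma$-rigidity it is the identity. Hence $\psi$ is a conditional expectation.
\item Conversely, any equivariant conditional expectation restricts to a pseudo-expectation.
\end{itemize}
So (iii) forces $\varphi=E_{I_\Gamma(A)}|_{A\rtimes_r\Gamma}=E_A$, which gives (iii)$\Rightarrow$(ii).

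For (ii)$\Rightarrow$(iii), let $\psi$ be an equivariant conditional expectation. Then $I_\Gamma(A)$ lies in the multiplicative domain of $\psi$, so $\psi(b\lambda_g)=b\,\psi(\lambda_g)$. Moreover $\psi(\lambda_g)=\psi|_{A\rtimes_r\Gamma}(\lambda_g)=E_A(\lambda_g)=0$ for $g\neq e$. Hence $\psi=E_{I_\Gamma(A)}$.

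\textbf{(i)$\Rightarrow$(ii).} Let $\varphi$ be a pseudo-expectation, so $A$ lies in its multiplicative domain. Fix $g\neq e$ and set $x=\varphi(\lambda_g)\in I_\Gamma(A)$.
\begin{itemize}
\item Equivariance and bimodularity give $x\,a=g(a)\,x$ for all $a\in A$, and $s.x=\varphi(\lambda_{sgs^{-1}})$.
\item Applying $\varphi$ to the averages of $y=\lambda_g b$ for $b\in A$ yields
\[
\Bigl\|\sum_j a_j^*\,s_j.\bigl(x\,s_j^{-1}\!.\,b\bigr)\,a_j\Bigr\|\le\varepsilon\|b\| .
\]
\end{itemize}
I expect the main obstacle to be upgrading this to $x=0$. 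What I would try first is the following. The intertwining relation places $x^*x$ in $A'\cap I_\Gamma(A)$, and the Kennedy--Schafhauser theory implies that relative commutant elements of this type are controlled by the $\Gamma$-invariant part. I would then choose $b$ (or replace $y$ by $y^*y$) so that the averaged expression dominates a nonzero multiple of a fixed positive element built from $x^*x$. This would give $x^*x=0$, hence $\varphi(\lambda_g)=0$ for every $g\ne e$, and therefore $\varphi=E_A$.

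\textbf{(iii)$\Rightarrow$(i).} I argue by contraposition. Suppose GPAP fails for some $y$. Then the norm-closed set $K$ of generalized averages of $z=y-E_A(y)$ is convex: a convex combination of two averages is again an average, using the coefficients $\sqrt{t}\,a_j$ and $\sqrt{1-t}\,a'_j$. Moreover $K$ is invariant under further averaging and stays at distance $\ge\delta>0$ from $0$.
\begin{itemize}
\item Hahn--Banach gives a state $\omega$ on $I_\Gamma(A)\rtimes_r\Gamma$ with $\operatorname{Re}\omega(k)\ge\delta$ on $K$.
\item Form the $\Gamma$-equivariant ucp Poisson map $P_\omega:I_\Gamma(A)\rtimes_r\Gamma\to\ell^\infty(\Gamma)$.
\item Push $P_\omega$ into $I_\Gamma(A)$ using $\Gamma$-injectivity, which yields an equivariant ucp map that is the identity on $I_\Gamma(A)$ by rigidity.
\end{itemize}
The averaging-invariance of $K$ is then used to show that the resulting conditional expectation does not annihilate $z$, so it differs from $E_{I_\Gamma(A)}$, contradicting (iii). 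The delicate point is making the $A$-coefficients compatible with the Poisson transform. Here I would follow the Amrutam--Ursu template, replacing $C(X)$-valued measures by $A$-module ucp maps.
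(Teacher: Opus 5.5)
Your (ii)$\Leftrightarrow$(iii) argument is correct and is the paper's own: extend by $\Gamma$-injectivity, apply rigidity on $I_\Gamma(A)$, then use the multiplicative domain. The two implications involving GPAP are not proved. In each you stop at the step that carries the whole difficulty.

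\textbf{(i)$\Rightarrow$(ii).} Applying $\varphi$ only gives $\|T_\mu|_{I_\Gamma(A)}(\varphi(z))\|\le\|T_\mu(z)\|$. In other words, $A$-coefficient averages of the elements $xb$, $b\in A$, become small. That cannot force $x=0$, for three reasons.
\begin{itemize}
\item Such averages shrink many nonzero elements. By Lemma~\ref{lem:elementary-in-T}, $T_{\mu_\lambda}(a)\to\rho(a)1$, so any $a$ with $\rho(a)=0$ is averaged to almost $0$.
\item You cannot take $b=x^*$, since $b$ must lie in $A$ while $x\in I_\Gamma(A)$.
\item $y^*y$ is not centered: for $y=\lambda_gb$ it equals $b^*b\in A$. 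So GPAP says nothing about it.
\end{itemize}
The appeal to Kennedy--Schafhauser ``controlling'' relative commutant elements is not an argument.

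What is missing is the passage upstairs, Theorem~\ref{thm:ncpap-full-envelope-lifting}. There GPAP downstairs gives $\mathsf S_{E_{I_\Gamma(A)}}\subseteq K_\psi^{\Prob_f(\Gamma,A)}$ for every state $\psi$ of $I_\Gamma(A)\rtimes_r\Gamma$. The proof has three steps: average to a state $\eta$ with $\eta|_{A\rtimes_r\Gamma}=\rho\circ E_A$ and $\rho$ pure; form $q(y)(t)=Q(\lambda_t^*y\lambda_t)\in\ell^\infty(\Gamma,B(\mathcal H_\rho))$; use $\Gamma$-injectivity and rigidity to get $Pq=E_{I_\Gamma(A)}$. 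Only then is the extension $P$ of your pseudo-expectation usable. The set $\{\rho\circ P\}$ is invariant under $A$-averaging. Because $P$ is $I_\Gamma(A)$-bimodular, not merely $A$-bimodular, the closed range of $P-E_{I_\Gamma(A)}$ is a $\Gamma$-invariant ideal of the $\Gamma$-simple algebra $I_\Gamma(A)$. This separates $\{\rho\circ P\}$ from $\mathsf S_{E_{I_\Gamma(A)}}$ unless $P=E_{I_\Gamma(A)}$. Staying downstairs loses exactly this bimodularity.

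\textbf{(iii)$\Rightarrow$(i).} First, Hahn--Banach gives a norm-one functional, not a state. The correct reduction is Proposition~\ref{prop:orbit-intersection}, which produces a state $\phi_0$ with $K_{\phi_0}^{\Prob_f(\Gamma,A)}\cap\SE=\varnothing$.

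More seriously, the scalar Poisson map cannot work for noncommutative $A$. No u.c.p.\ map factoring through the commutative algebra $\ell^\infty(\Gamma)$ can fix $A$. By Choi's multiplicative-domain argument, a u.c.p.\ left inverse would make $A$ a $*$-homomorphic image of the commutative $C^*$-algebra generated by the image. So rigidity never applies, and you never obtain a conditional expectation to compare with $E_{I_\Gamma(A)}$. The ``delicate point'' you defer is the entire proof.

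The paper resolves it with matrix states. It builds a $\Gamma$-invariant closed matrix convex set $\mathcal K\subseteq\operatorname{MS}(A\rtimes_r\Gamma)$ with $r(\mathcal K)=\operatorname{MS}(A)$ and $\mathcal K_1\subseteq K_{\phi_0}^{\Prob_f(\Gamma,A)}$. It then extracts a boundary $\mathcal L\neq\mathcal K_E$ by \cite{KS19}*{Corollary~7.7}, and gets a pseudo-expectation different from $E_A$ via \cite{KS19}*{Theorem~7.8}.

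Your Poisson idea can be repaired with the device above.
\begin{itemize}
\item Average $\phi_0$ to $\eta\in K_{\phi_0}^{\Prob_f(\Gamma,A)}$ with $\eta|_A=\rho$ pure.
\item Replace $\ell^\infty(\Gamma)$ by $\ell^\infty(\Gamma,B(\mathcal H_\rho))$ and use $q$. Then $q|_A$ is an injective $*$-homomorphism by $\Gamma$-simplicity.
\item Extending $q(a)\mapsto a$ gives $P$ with $Pq$ an equivariant pseudo-expectation. Uniqueness forces $Pq=E_A$.
\item Kadison transitivity together with Lemma~\ref{lem:density} puts every $\omega\circ E_A=\omega\circ P\circ q$ into $K_{\phi_0}^{\Prob_f(\Gamma,A)}$, a contradiction.
\end{itemize}
As written, though, neither direction involving GPAP is established.
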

The passage to larger coefficient algebras is another part of
the problem in terms of relative averaging. In the commutative setting, this passage is
visible in~\cite{AmrutamKalantar2020}*{Lemma~2.1} and
\cite{AmrutamUrsu2022}*{Lemma~4.1}. The equivariant injective
envelope has the additional rigidity that allows us to lift the averaging scheme.
\begin{maintheorem}
\label{thm:mainlifting}
Let $\Gamma$ be a countable discrete group and let $A$ be a
unital $\Gamma$-simple $\Gamma$-$C^*$-algebra. Assume that
$A\rtimes_r\Gamma$ has GPAP. Then, for every
$y_1,\ldots,y_n\in I_\Gamma(A)\rtimes_r\Gamma$ and
$\varepsilon>0$, there exist $a_1,\ldots,a_m\in A$ and
$s_1,\ldots,s_m\in\Gamma$ such that
$\sum_{j=1}^m a_ja_j^*=1$ and
\[
\left\|\sum_{j=1}^m a_j\lambda_{s_j}
\bigl(y_i-E_{I_\Gamma(A)}(y_i)\bigr)
\lambda_{s_j}^*a_j^*\right\|<\varepsilon,~\forall 1\leq i\leq n.
\]
\end{maintheorem}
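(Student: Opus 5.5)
The plan is to deduce Theorem~\ref{thm:mainlifting} from condition (iii) of Theorem~\ref{thm:mainaveraging}, using a Hahn--Banach/convexity argument and the rigidity of $I_\Gamma(A)$.

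\textbf{Setup.} Let $B=I_\Gamma(A)\rtimes_r\Gamma$. Consider the set $\mathcal{C}$ of unital completely positive maps on $B$ of the form
\[
\phi(x)=\sum_j a_j\lambda_{s_j}x\lambda_{s_j}^*a_j^*,\qquad a_j\in A,\ s_j\in\Gamma,\ \sum_j a_ja_j^*=1.
\]
This set is a convex semigroup under composition. Indeed, composing two such maps gives coefficients $a_j\,\sigma_{s_j}(b_k)$, which again lie in $A$ and satisfy the normalization. Let $\overline{\mathcal{C}}$ be its closure in the point-norm (or point-weak$^*$) topology on $\mathrm{UCP}(B,B^{**})$.

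\textbf{Step 1: reduce to a single element.} It suffices to show that for each $y\in B$ with $E_{I_\Gamma(A)}(y)=0$, the norm closure of $\{\phi(y):\phi\in\mathcal C\}$ contains $0$. The passage from one $y$ to finitely many $y_1,\dots,y_n$ is the standard iteration. Choose $\phi_1$ that makes $y_1$ small. Then choose $\phi_2$ for $\phi_1(y_2)$, noting that $\phi_1(y_2)$ still has zero expectation because each $\phi\in\mathcal{C}$ commutes with $E_{I_\Gamma(A)}$. Continue in this way, and use that the maps are contractive so earlier elements stay small. The composite $\phi_n\circ\cdots\circ\phi_1$ lies in $\mathcal C$.

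The commutation with the expectation can be checked directly. Since $E(\lambda_s x\lambda_s^*)=s.E(x)$ and $E(axa^*)=aE(x)a^*$ for $a\in A\subseteq I_\Gamma(A)$, each $\phi\in\mathcal{C}$ preserves $\ker E$.

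\textbf{Step 2: separation.} Suppose, for contradiction, that $0\notin\overline{\{\phi(y)\}}$. Since this set is convex, Hahn--Banach gives a state-type functional separating $0$ from it. The standard route, as in Haagerup/Kennedy and Amrutam--Ursu, is then to produce a $\Gamma$-equivariant ucp map $\psi:B\to I_\Gamma(A)$ that restricts to the identity on $A$ and has $\psi(y)\neq0$. The construction would go as follows:
\begin{itemize}
\item Take a state $\omega$ on $B$ with $\operatorname{Re}\omega(\phi(y))\ge\delta>0$ for all $\phi\in\mathcal C$.
\item Consider the $\Gamma$-equivariant, $A$-bimodular-type ucp maps $B\to\ell^\infty(\Gamma, B^{**})$.
\item Use $\Gamma$-injectivity of $I_\Gamma(A)$ to push into $I_\Gamma(A)$.
\end{itemize}
A cleaner alternative is to invoke GPAP directly on $A\rtimes_r\Gamma$, i.e.\ condition (ii), and extend. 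By $\Gamma$-injectivity and rigidity of $I_\Gamma(A)$, any $\Gamma$-equivariant ucp map $B\to I_\Gamma(A)$ that is the identity on $A$ must be the identity on $I_\Gamma(A)$. It is therefore a conditional expectation, and by (iii) it equals $E_{I_\Gamma(A)}$. Thus every such limit map kills $y$, contradicting the separation. The rigidity argument works because the restriction to $I_\Gamma(A)$ is a $\Gamma$-equivariant ucp map $I_\Gamma(A)\to I_\Gamma(A)$ fixing $A$, hence the identity by essentiality.

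\textbf{Main obstacle.} The main obstacle is building, from the failure of averaging, a $\Gamma$-equivariant ucp map $B\to I_\Gamma(A)$ that fixes $A$ and is nonzero on $y$. My plan is to follow the Kennedy-style argument:
\begin{itemize}
\item Take a weak$^*$ limit point $\Phi$ in $\overline{\mathcal C}$, viewed as maps $B\to B^{**}$, chosen minimal with respect to the semigroup order. Such a $\Phi$ exists by Zorn's lemma and compactness.
\item Compose with a ucp map $B^{**}\to I_\Gamma(A)$ obtained from $\Gamma$-injectivity, extending the identity on $A$.
\item Make the result equivariant by averaging via the semigroup structure.
\end{itemize}
The delicate point is that the coefficients $a_j$ lie only in $A$, not in $I_\Gamma(A)$. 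The resulting map is therefore only guaranteed to be $A$-bimodular, and rigidity must be used to upgrade this to the identity on $I_\Gamma(A)$. That is exactly where the hypothesis on the envelope is needed.
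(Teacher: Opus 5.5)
There is a genuine gap, and it sits where the content of the theorem lies.

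First, the hypothesis enters your argument only through condition (iii) of Theorem~\ref{thm:mainaveraging}, namely uniqueness of the equivariant conditional expectation on $I_\Gamma(A)\rtimes_r\Gamma$. You never derive that uniqueness from GPAP for $A\rtimes_r\Gamma$. In the paper this implication is obtained \emph{from} the statement you are proving: relative GPAP upstairs with coefficients in $A$ gives GPAP upstairs, and the ideal/state-orbit argument in Theorem~\ref{thm:main} turns that into uniqueness. So, as written, your plan is circular. A direct attempt does not repair this: applying a pseudo-expectation $\Phi$ to $T_\mu(x)\to0$ only shows that $A$-coefficient averages of $\Phi(x)\in I_\Gamma(A)$ tend to $0$, and that alone does not force $\Phi(x)=0$.

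Second, even granting uniqueness, you do not carry out the step you call the main obstacle, and your sketch would not work. That step is to produce, from the failure of averaging, an equivariant u.c.p.\ map $I_\Gamma(A)\rtimes_r\Gamma\to I_\Gamma(A)$ that fixes $A$ and does not kill $y$. For non-amenable $\Gamma$ there is no averaging that makes a map equivariant. Nothing in the Zorn/minimal-element construction ties the resulting map to the separating state. A Poisson-type map built from a scalar state, $y\mapsto(t\mapsto\eta(\lambda_t^*y\lambda_t))$, also fails, since it lands in $\ell^\infty(\Gamma)$ and then $q(a)\mapsto a$ is not u.c.p.\ for noncommutative $A$.

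The paper uses GPAP downstairs directly and needs no uniqueness:
\begin{enumerate}
\item By Proposition~\ref{prop:orbit-intersection} it suffices to show $\omega\circ E_{I_\Gamma(A)}\in K_\varphi^{\Prob_f(\Gamma,A)}$.
\item Averaging $\varphi$ along a GPAP net, and then along the maps of Lemma~\ref{lem:elementary-in-T}, gives $\eta\in K_\varphi^{\Prob_f(\Gamma,A)}$ with $\eta|_{A\rtimes_r\Gamma}=\rho\circ E_A$ for a pure state $\rho$.
\item Compressing the GNS representation of $\eta$ to $\overline{\pi_\eta(A)\Omega}$ gives $Q$ with $Q|_A=\pi_\rho$ irreducible and $Q(\lambda_g)=0$ for $g\neq e$.
\item The map $q(y)(t)=Q(\lambda_t^*y\lambda_t)$ into $\ell^\infty(\Gamma,B(\mathcal H_\rho))$ is equivariant and faithful on $A$. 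So $\Gamma$-injectivity and rigidity give $P$ with $Pq=E_{I_\Gamma(A)}$.
\item Kadison transitivity puts each $\omega_{t,\xi}\circ q$ in $K_\varphi^{\Prob_f(\Gamma,A)}$. Density of their convex hull then yields $\omega\circ E_{I_\Gamma(A)}=\omega\circ P\circ q$.
\end{enumerate}
Your Step~1 reduction to a single element is fine and matches Lemma~\ref{lem:centered-finite-averaging}.
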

We also consider conditions under which simplicity itself
implies GPAP. The vanishing-obstruction condition of
Kennedy--Schafhauser~\cite{KS19} and the FC-hypercentrality of the group considered by
Geffen--Ursu~\cite{GU25} give two such conditions.
We denote these conditions by
\textup{(V)} and \textup{(F)}, respectively.
Under each of them, simplicity of $A\rtimes_r\Gamma$ is
equivalent to GPAP.  We also give an example where neither of these conditions is satisfied and yet GPAP holds. 

\subsection*{Organization of the Paper} In addition to this section, there are three other sections. In
Section~\ref{sec:setup}, we fix notation and prove some preparatory results.
Section~\ref{sec:unique-pseudo-expectation} proves Theorem~\ref{thm:mainaveraging} and Theorem~\ref{thm:mainlifting}. In Section~\ref{sec:simplicity}, we show that each of the conditions \textup{(F)} and
\textup{(V)} implies GPAP. Finally,  we finish with an explicit averaging
calculation for a crossed product satisfying neither of them.

\section{Notation and preliminaries}
\label{sec:setup}
Throughout this paper, let $\Gamma$ be a countable discrete group and $A$ be a unital $\Gamma$-simple $\Gamma$-$C^*$-algebra.  An action of $\Gamma$ on $A$ is a group homomorphism $\alpha$ from $\Gamma$ to the group of $*$-automorphisms on $A$. A $C^*$-algebra equipped with a $\Gamma$-action is called a $\Gamma$-$C^*$-algebra. We say that $A$ is $\Gamma$-simple if $\{0\}$ and $A$ are its only $\Gamma$-invariant closed two-sided ideals. Moreover, let $I_\Gamma(A)$ be the $\Gamma$-injective envelope of $A$ in the sense of Hamana~\cite{Hamana1985}. By the functoriality of the $\Gamma$-injective envelope, $\alpha$ extends to an action of $\Gamma$ on $I_\Gamma(A)$. We shall continue to denote the extended action by $\alpha$, whenever there is no risk of confusion. 

We briefly describe the construction of the crossed product for unital $C^*$-algebras in general and refer the reader to \cite{brown2008textrm} for more details. We start with $\pi:A \to \mathbb{B}(\mathcal{H})$, a faithful $*$-representation. We denote by $\ell^2(\Gamma,\mathcal{H})$, the space of square summable $\mathcal{H}$-valued functions on $\Gamma$, i.e.,
\[\ell^2(\Gamma,\mathcal{H})=\left\{\xi:\Gamma\to \mathcal{H}\text{ such that }\sum_{t\in\Gamma}\|\xi(t)\|_{\mathcal{H}}^2<\infty\right\}\] The group $\Gamma$ acts on $\ell^2(\Gamma,\mathcal{H})$ by left translation,
\[\lambda_s\xi(t):=\xi(s^{-1}t), \quad \xi \in \ell^2(\Gamma,\mathcal{H}), \ s,t \in \Gamma.\]
Let $\sigma$ be the $*$-representation $\sigma:A \to \mathbb{B}(\ell^2(\Gamma,\mathcal{H}))$ defined by \[\sigma(a)(\xi)(t):=\pi\bigl(\alpha_{t^{-1}}(a)\bigr)\xi(t), \quad a \in A,\]
where $\xi \in \ell^2(\Gamma,\mathcal{H})$, $t \in \Gamma$.
The reduced crossed product $C^*$-algebra $A\rtimes_r\Gamma$ is the norm closure of the $*$-subalgebra of $\mathbb{B}(\ell^2(\Gamma,\mathcal H))$ generated by $\sigma(A)$ and $\{\lambda_s:s\in\Gamma\}$. Moreover, $\lambda_s\sigma(a)\lambda_{s^{-1}}=\sigma(\alpha_s(a))$ for all $s\in\Gamma$ and $a\in A$. We identify $A$ with its image $\sigma(A)$.

The reduced crossed product $A\rtimes_r\Gamma$ also comes equipped with a $\Gamma$-equivariant canonical conditional expectation $E_A:A\rtimes_r\Gamma\to A$ given by 
\[E_A\left(a_s\lambda_s\right)=
\begin{cases}
0 & \text{if $s\ne e$,}\\
a_e & \text{if $s=e$.}
\end{cases}\]
This expectation is faithful by \cite{brown2008textrm}*{Proposition~4.1.9}. The same notation \(\lambda_g\) will be used for the canonical group unitaries in $I_\Gamma(A)\rtimes_r\Gamma$. Under the natural inclusion
 $A\rtimes_r\Gamma  \subseteq I_\Gamma(A)\rtimes_r\Gamma$,
these canonical unitaries are identified with one another. Conjugation by them defines an action of \(\Gamma\) on either crossed product,
$\widetilde\alpha_g(x):=\lambda_gx\lambda_g^*.$ Under the natural inclusion $A \rtimes_r \Gamma \subseteq I_\Gamma(A) \rtimes_r \Gamma$, it follows that $E_{I_\Gamma(A)}|_{A \rtimes_r \Gamma} = E_A$.

Motivated by Powers' averaging argument~\cite{Powers1975} and the commutative case (generalized probability measures with coefficients
in $C(X)$), we introduce the following. We use the same averaging formula as in \cite{AmrutamUrsu2022} with coefficients in a
noncommutative algebra. The coefficients and their group elements are an indexed family, so repetitions are allowed.

\begin{definition}
\label{def:prob}
Let $\Prob_f(\Gamma,A)$ denote the collection of finite indexed families
$\mu=((a_j,s_j))_{j=1}^m$, where $a_j\in A$, $s_j\in\Gamma$, and
$\sum_{j=1}^m a_ja_j^*=1$.
Associated with $\mu$ is the unital completely positive map
$T_\mu:A\rtimes_r\Gamma\to A\rtimes_r\Gamma$ given by
\[
T_\mu(y):=\sum_{j=1}^m
a_j\lambda_{s_j}y\lambda_{s_j}^*a_j^*.
\]
For a bounded linear functional $\varphi$, we write
$\varphi\mu=\varphi\circ T_\mu$.

If $A\subseteq B$ is a unital equivariant
inclusion, the same formula for $\mu\in\Prob_f(\Gamma,A)$ defines
a map on $B\rtimes_r\Gamma$. We have
$\Prob_f(\Gamma,A)\subseteq\Prob_f(\Gamma,B)$.
For $\varphi\in S(B\rtimes_r\Gamma)$, we write
$K_\varphi^{\Prob_f(\Gamma,A)}:=\overline{\{\varphi\mu:\mu\in\Prob_f(\Gamma,A)\}}^{\,w^*}$.
We shall be concerned with the inclusion $A\subseteq I_\Gamma(A)$.
\end{definition}

Repetitions cannot in general be combined into a single coefficient.
Even in the commutative setting, two terms with the same group element
need not define the same map as one term with a new coefficient.
See~\cite{AmrutamUrsu2022}*{Remark~2.4 and Example~2.9}.

\begin{remark}
\label{rem:convex-semigroup}
Let $A\subseteq B$ be a unital equivariant inclusion.
The maps associated with $\Prob_f(\Gamma,A)\subseteq\Prob_f(\Gamma,B)$ on
$B\rtimes_r\Gamma$ form convex families closed under composition.

Let $\mu=((a_i,s_i))_{i=1}^m$ and $\nu=((b_j,t_j))_{j=1}^n$
belong to $\Prob_f(\Gamma,B)$, and let $r\in[0,1]$. Concatenating
$((\sqrt r\,a_i,s_i))_{i=1}^m$ and
$((\sqrt{1-r}\,b_j,t_j))_{j=1}^n$ gives a family $\omega$ with
\[
\sum_i(\sqrt r\,a_i)(\sqrt r\,a_i)^*
+\sum_j(\sqrt{1-r}\,b_j)(\sqrt{1-r}\,b_j)^*
=r1+(1-r)1=1.
\]
Thus $T_\omega=rT_\mu+(1-r)T_\nu$, proving convexity. For composition, let
$\mu*\nu=((a_i\alpha_{s_i}(b_j),s_it_j))_{i,j}$.
Its coefficients satisfy
\[
\sum_{i,j}a_i\alpha_{s_i}(b_jb_j^*)a_i^*
=\sum_i a_i\alpha_{s_i}\left(\sum_jb_jb_j^*\right)a_i^*
=\sum_i a_ia_i^*=1.
\]
Moreover, for $y\in B\rtimes_r\Gamma$, covariance gives
\[
(T_\mu\circ T_\nu)(y)
=\sum_{i,j}a_i\alpha_{s_i}(b_j)\lambda_{s_it_j}
y\lambda_{s_it_j}^*\bigl(a_i\alpha_{s_i}(b_j)\bigr)^*=T_{\mu*\nu}(y).
\]
Hence the composite has coefficients in $B$. If the coefficients
of $\mu$ and $\nu$ lie in $A$, both of these elements still have
coefficients in $A$.
Finally, the family $((1,e))$ gives the identity map.

Now observe that the canonical conditional expectation is compatible with these maps. Let
$\mu=((a_j,s_j))_{j=1}^m\in\Prob_f(\Gamma,B)$. Since $E_B$ is $B$-bimodular and
$\Gamma$-equivariant, we see that
$E_B(T_\mu(y))=\sum_j a_j\alpha_{s_j}(E_B(y))a_j^*=T_\mu(E_B(y))$
for any $y\in B\rtimes_r\Gamma$,
that is, $E_B\circ T_\mu=(T_\mu|_B)\circ E_B$. In particular
$T_\mu(\ker E_B)\subseteq\ker E_B$, and the same conclusion holds for
$\mu\in\Prob_f(\Gamma,A)$.
\end{remark}

\begin{definition}
\label{def:ncpap}
Let $A\subseteq B$ be a unital $\Gamma$-equivariant inclusion.
We say that $B\rtimes_r\Gamma$ has
\begin{enumerate}[label=\textup{(\alph*)},leftmargin=*]
\item the \emph{generalized Powers averaging property} if, for every $y\in B\rtimes_r\Gamma$,
$\inf_{\mu\in\Prob_f(\Gamma,B)}\|T_\mu(y-E_B(y))\|=0$;
\item \emph{relative GPAP with coefficients in $A$} if, for every
$y\in B\rtimes_r\Gamma$, we have
$\inf_{\mu\in\Prob_f(\Gamma,A)}\|T_\mu(y-E_B(y))\|=0$.
\end{enumerate}
In the relative property, the averaging coefficients are required
to lie in $A$, while $y$ may lie in $B\rtimes_r\Gamma$.
\end{definition}
We use the following convention. For $B=I_\Gamma(A)$, we refer to these two properties as \emph{GPAP upstairs} and \emph{relative
GPAP upstairs}, respectively. By \emph{GPAP downstairs}, we mean GPAP for
$A\rtimes_r\Gamma$. The following elementary observation gives
both the kernel formulation and the finite-set formulation of
these properties.

\begin{lemma}
\label{lem:centered-finite-averaging}
Let $A\subseteq B$ be a unital equivariant inclusion. Then the following are equivalent:
\begin{enumerate}[label=\textup{(\roman*)},leftmargin=*]
\item For every $y\in B\rtimes_r\Gamma$ and every $\varepsilon>0$,
there exists $\mu\in\Prob_f(\Gamma,A)$ such that $\|T_\mu(y-E_B(y))\|<\varepsilon$.
\item For every $x\in\ker E_B$, $\inf_{\mu\in\Prob_f(\Gamma,A)}\|T_\mu(x)\|=0$.
\item For every finite set $x_1,\ldots,x_n\in\ker E_B$ and every
$\varepsilon>0$, there exists $\mu\in\Prob_f(\Gamma,A)$ such that
$\|T_\mu(x_i)\|<\varepsilon$ for every $1\leq i\leq n$.
\item For every $y_1,\ldots,y_n\in B\rtimes_r\Gamma$ and every
$\varepsilon>0$, there exists $\mu\in\Prob_f(\Gamma,A)$ such that
$\|T_\mu(y_i-E_B(y_i))\|<\varepsilon$ for every $1\leq i\leq n$.
\end{enumerate}
The same equivalence holds with $\Prob_f(\Gamma,B)$ in place of
$\Prob_f(\Gamma,A)$.

\end{lemma}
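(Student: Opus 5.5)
The equivalences (i)$\Leftrightarrow$(ii) and (iii)$\Leftrightarrow$(iv) are bookkeeping; the real content is (ii)$\Rightarrow$(iii). I would prove the cycle (i)$\Rightarrow$(ii)$\Rightarrow$(iii)$\Rightarrow$(iv)$\Rightarrow$(i).

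\textbf{The easy implications.} For (i)$\Leftrightarrow$(ii), note that $E_B$ is idempotent. Hence $x-E_B(x)=x$ for $x\in\ker E_B$, and $y-E_B(y)\in\ker E_B$ for every $y$. The implication (iv)$\Rightarrow$(i) is the case $n=1$. For (iii)$\Rightarrow$(iv), apply (iii) to the elements $x_i=y_i-E_B(y_i)$.

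\textbf{The implication (ii)$\Rightarrow$(iii).} I would use the standard successive-averaging argument, based on Remark~\ref{rem:convex-semigroup}. Given $x_1,\dots,x_n\in\ker E_B$ and $\varepsilon>0$, proceed inductively.

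1. Choose $\mu_1$ with $\|T_{\mu_1}(x_1)\|<\varepsilon$.
2. By Remark~\ref{rem:convex-semigroup}, $T_{\mu_1}(x_2)\in\ker E_B$.
3. Apply (ii) to $T_{\mu_1}(x_2)$ to obtain $\mu_2$ with $\|T_{\mu_2}T_{\mu_1}(x_2)\|<\varepsilon$.
4. Continue in the same way: at step $k$, apply (ii) to $T_{\mu_{k-1}}\cdots T_{\mu_1}(x_k)\in\ker E_B$ to obtain $\mu_k$.

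Each $T_\mu$ is unital completely positive, hence contractive. So later averagings never increase norms that are already small:
\[
\|T_{\mu_n}\cdots T_{\mu_1}(x_i)\|\le\|T_{\mu_i}\cdots T_{\mu_1}(x_i)\|<\varepsilon
\quad\text{for all } 1\le i\le n.
\]
By the composition rule in Remark~\ref{rem:convex-semigroup}, the map $T_{\mu_n}\circ\cdots\circ T_{\mu_1}$ equals $T_\mu$ for the single family $\mu=\mu_n*\cdots*\mu_1$. This family still has coefficients in $A$ (or in $B$), which gives (iii).

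\textbf{Where care is needed.} The only delicate point is checking the two facts used above in the relative setting: $\ker E_B$ is invariant under $T_\mu$ for $\mu\in\Prob_f(\Gamma,A)$, and composites of such maps stay within $\Prob_f(\Gamma,A)$. Both were already verified in Remark~\ref{rem:convex-semigroup}.

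The $\Prob_f(\Gamma,B)$ version follows verbatim, replacing $A$ by $B$ throughout.
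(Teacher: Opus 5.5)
Your proposal is correct and essentially follows the paper's proof. The key step (ii)$\Rightarrow$(iii) uses the same successive-averaging argument: invariance of $\ker E_B$ under each $T_\mu$, closure of $\Prob_f(\Gamma,A)$ under the composition $*$ from Remark~\ref{rem:convex-semigroup}, and contractivity of the maps $T_\mu$; the remaining implications are the same bookkeeping, arranged as one cycle instead of pairwise equivalences.
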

\begin{proof}
Every $y-E_B(y)$ belongs to $\ker E_B$, and every $x\in\ker E_B$
equals $x-E_B(x)$. This proves \textup{(i)}$\Leftrightarrow$\textup{(ii)}.
Let us now prove \textup{(ii)}$\Rightarrow$\textup{(iii)}. Let
$x_1,\ldots,x_n\in\ker E_B$ and $\varepsilon>0$.
We construct $\nu_0,\ldots,\nu_n\in\Prob_f(\Gamma,A)$ such that
$\|T_{\nu_k}(x_i)\|<\varepsilon$ whenever $1\leq i\leq k$. Let $\nu_0=((1,e))$. Suppose that $\nu_{k-1}$ has been chosen.
By Remark~\ref{rem:convex-semigroup},
$T_{\nu_{k-1}}(x_k)\in\ker E_B$. Applying \textup{(ii)} to this
element, choose $\mu_k\in\Prob_f(\Gamma,A)$ such that
$\|T_{\mu_k}(T_{\nu_{k-1}}(x_k))\|<\varepsilon$.
Let $\nu_k=\mu_k*\nu_{k-1}$. By
Remark~\ref{rem:convex-semigroup}, $\nu_k\in\Prob_f(\Gamma,A)$ and
$T_{\nu_k}=T_{\mu_k}\circ T_{\nu_{k-1}}$.
Moreover, for every $1\leq i<k$, contractivity of $T_{\mu_k}$ gives
\[
\|T_{\nu_k}(x_i)\|
=\|T_{\mu_k}(T_{\nu_{k-1}}(x_i))\|
\leq\|T_{\nu_{k-1}}(x_i)\|
<\varepsilon.
\]
Thus $\nu_k$ has the required property. Taking $\mu=\nu_n$
proves \textup{(iii)}. The implication
\textup{(iii)}$\Rightarrow$\textup{(ii)} follows by taking a singleton.
The implication \textup{(iii)}$\Rightarrow$\textup{(iv)} follows by writing
$x_i=y_i-E_B(y_i)$, and the converse follows by taking $y_i=x_i$.
\end{proof}
The elementary completely positive maps of
\cite{amrutam2025splitting} will be used twice, once downstairs and
once upstairs. We record here the normalization required by
Definition~\ref{def:prob}.
\begin{lemma}
\label{lem:elementary-in-T}
Let $\rho\in S(A)$. There is a net $(\mu_\lambda)_\lambda$ in
$\Prob_f(\Gamma,A)$ such that
\[
\bigl\|T_{\mu_\lambda}(a)-\rho(a)1\bigr\|\longrightarrow 0
\qquad\text{for every }a\in A.
\]
Here $T_{\mu_\lambda}$ may be computed on $B\rtimes_r\Gamma$ for any
unital equivariant inclusion $A\subseteq B$; the coefficients of
$\mu_\lambda$ lie in $A$, so that $\mu_\lambda\in\Prob_f(\Gamma,A)$ in
either case.
\end{lemma}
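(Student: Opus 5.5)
The plan is to reduce to pure states and then use Kadison–Akemann–Anderson–Pedersen excision together with $\Gamma$-simplicity. The maps $a\mapsto T_\mu(a)=\sum_j a_j\alpha_{s_j}(a)a_j^*$ only involve $A$, so they are the same whether computed in $A\rtimes_r\Gamma$ or in $B\rtimes_r\Gamma$. Hence it suffices to work inside $A$.

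First I reduce to pure $\rho$. The set of states $\rho$ for which the conclusion holds is convex, because of the convexity in Remark~\ref{rem:convex-semigroup}. It is also weak$^*$-closed: if $\rho_\lambda\to\rho$ weak$^*$, then $\|\rho_\lambda(a)1-\rho(a)1\|=|\rho_\lambda(a)-\rho(a)|\to0$ for each $a$. One then approximates first $\rho$ by $\rho_\lambda$ and then $\rho_\lambda$ by some $T_\mu$, and indexes the resulting net by (finite set $F\subseteq A$, $\varepsilon>0$). By Krein–Milman it therefore suffices to treat a pure state $\rho$.

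For pure $\rho$, fix a finite set $F\subseteq A$ and $\delta>0$. Excision gives a positive norm-one $e\in A$ with $\|e(a-\rho(a))e\|<\delta$ for all $a\in F$. I will pick, by functional calculus, a nonzero positive $g\in A$ with $ge=g$; for instance $g=f(e)$ with $f$ supported near $1$, which is nonzero since $\|e\|=1$. Then for $y_a:=e(a-\rho(a))e$ we have $g(a-\rho(a))g=gy_ag$, and the real and imaginary parts of $y_a$ lie between $-\delta$ and $\delta$. Consequently
\[
-\delta g^2\le g\,\mathrm{Re}(y_a)\,g\le \delta g^2,
\]
and similarly for the imaginary part.

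Next I use $\Gamma$-simplicity on $g^2$. The closed $\Gamma$-invariant ideal generated by $g^2\neq0$ is $A$. Hence there are $c_1,\ldots,c_m\in A$ and $t_1,\ldots,t_m\in\Gamma$ with $h:=\sum_k c_k\alpha_{t_k}(g^2)c_k^*$ invertible and $\|h-1\|<\delta$. The standard argument that a positive element of an ideal close to $1$ can be taken in this form, after replacing $b\,\alpha_t(g^2)\,b'$ terms by positive ones via $x^*x$ tricks, is the step I expect to need care. Then I set $a_k:=h^{-1/2}c_k\alpha_{t_k}(g)$ and $s_k:=t_k$. By construction $\sum_k a_ka_k^*=h^{-1/2}hh^{-1/2}=1$, so $\mu:=((a_k,s_k))_k\in\Prob_f(\Gamma,A)$. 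For $a\in F$, positivity of $x\mapsto\sum_k c_k\alpha_{t_k}(x)c_k^*$ and the operator inequalities above give
\[
\Bigl\|\sum_k c_k\alpha_{t_k}\bigl(g(a-\rho(a))g\bigr)c_k^*\Bigr\|\le 2\delta\|h\|.
\]
Hence
\[
\|T_\mu(a)-\rho(a)1\|=\Bigl\|h^{-1/2}\sum_k c_k\alpha_{t_k}\bigl(g(a-\rho(a))g\bigr)c_k^*h^{-1/2}\Bigr\|\le 2\delta\|h\|\|h^{-1}\|,
\]
which is $O(\delta)$. The key point is that the error is controlled by $\|h\|$ rather than by $\sum_k\|c_k\|^2$, which is uncontrolled; this is exactly why I pass from $e$ to $g$ with $ge=g$. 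Letting $(F,\delta)$ run over the directed set gives the required net.
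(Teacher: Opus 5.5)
One step fails as written: the choice of $g$. The relation $f(e)e=f(e)$ means $f(t)(1-t)=0$ on $\mathrm{sp}(e)$. So $f$ must vanish on all of $\mathrm{sp}(e)\setminus\{1\}$; being ``supported near $1$'' is not enough. If $1$ is not an isolated point of $\mathrm{sp}(e)$, continuity then forces $f(1)=0$, hence $g=0$. Excision gives no such isolation: in $C[0,1]$, for instance, $\mathrm{sp}(e)$ is an interval, so $1$ is isolated only when $e=1$. Without $ge=g$ you lose the identity $g(a-\rho(a))g=g\,y_a\,g$. With it goes the operator bound by $\pm\delta g^2$, which is the heart of your construction.

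The repair is cheap. Put $e'=\min(1,2e)=e\,r(e)$ with $r(t)=\min(2,1/t)$ and $r(0)=2$. Then $\|r(e)\|\le 2$, so $\|e'(a-\rho(a))e'\|<4\delta$ for $a\in F$. Now take $g=f(e)$ with $0\le f\le 1$ supported in $[1/2,1]$ and $f(1)=1$. This gives $ge'=e'g=g$, and $g\neq 0$ because $1\in\mathrm{sp}(e)$. Your inequalities then hold with $y_a:=e'(a-\rho(a))e'$ and $4\delta$ in place of $\delta$.

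For the step you flagged, you can even arrange $h=1$. Suppose $z=\sum_k x_k\alpha_{t_k}(g^2)y_k$ is invertible, as $\Gamma$-simplicity provides. Factoring $z$ as a row times a column gives $zz^*\le\bigl\|\sum_k y_k^*\alpha_{t_k}(g^2)y_k\bigr\|\sum_k x_k\alpha_{t_k}(g^2)x_k^*$. Hence $s=\sum_k x_k\alpha_{t_k}(g^2)x_k^*$ is invertible, and $c_k:=s^{-1/2}x_k$ yields $h=1$. With these changes the rest of your argument goes through: the convexity/closedness reduction to pure states, the normalization, and the final estimate.

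Once repaired, your proof is a self-contained version of what the paper imports. The paper does not reprove the approximation. It takes from \cite{amrutam2025splitting}*{Proposition~3.2} a positive $a_\lambda$, elements $\tilde a_i$ and $t_i$ with $\sum_i\tilde a_i\alpha_{t_i}(a_\lambda^2)\tilde a_i^*=1$ and $\sum_i\tilde a_i\alpha_{t_i}(a_\lambda x a_\lambda)\tilde a_i^*\to\rho(x)1$. It then only performs the normalization $c_i=\tilde a_i\alpha_{t_i}(a_\lambda)$. Your $g$, $h^{-1/2}c_k$ and $a_k=h^{-1/2}c_k\alpha_{t_k}(g)$ correspond exactly to $a_\lambda$, $\tilde a_i$ and $c_i$. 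What your route adds is an explicit account of the cited input: the Krein--Milman reduction, excision, and the use of $\Gamma$-simplicity. The cost is length, and, as shown above, some care with the functional calculus.
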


\begin{proof}
By \cite{amrutam2025splitting}*{Proposition~3.2} and its proof, there
are, for each $\lambda$, an element $a_\lambda\in A^{+}$, elements
$\tilde a_1,\ldots,\tilde a_m\in A$ and group elements
$t_1,\ldots,t_m\in\Gamma$ (all depending on $\lambda$) such that
\[
\sum_{i=1}^{m}\tilde a_i\,\alpha_{t_i}(a_\lambda^{2})\,\tilde a_i^{*}=1,
~
\Psi_\lambda(x):=\sum_{i=1}^{m}
\tilde a_i\,\alpha_{t_i}(a_\lambda xa_\lambda)\,\tilde a_i^{*}
~(x\in A),
\]
and $\Psi_\lambda(a)\to\rho(a)1$ in norm for every $a\in A$.
Set $c_i:=\tilde a_i\,\alpha_{t_i}(a_\lambda)\in A$. Since $a_\lambda$
is positive, $\alpha_{t_i}(a_\lambda)^{*}=\alpha_{t_i}(a_\lambda)$, and
therefore for $x\in A$, we have
\[
\sum_{i=1}^{m}c_ic_i^{*}
=\sum_{i=1}^{m}\tilde a_i\,\alpha_{t_i}(a_\lambda^{2})\,\tilde a_i^{*}=1,
\text{and }
\sum_{i=1}^{m}c_i\,\alpha_{t_i}(x)\,c_i^{*}=\Psi_\lambda(x).
\]
Hence $\mu_\lambda:=((c_i,t_i))_{i=1}^{m}\in\Prob_f(\Gamma,A)$.
For $a\in A$, covariance gives
\[
T_{\mu_\lambda}(a)=\sum_{i=1}^m c_i\lambda_{t_i}a\lambda_{t_i}^{*}c_i^{*}
=\sum_{i=1}^m c_i\alpha_{t_i}(a)c_i^{*}=\Psi_\lambda(a),
\]
and the same formula defines $T_{\mu_\lambda}$ on $B\rtimes_r\Gamma$
for any unital equivariant inclusion $A\subseteq B$.
\end{proof}
Recall that every $\Gamma$-equivariant u.c.p.~map
from a $\Gamma$-operator subsystem into $I_\Gamma(A)$ extends to the containing
$\Gamma$-operator system. Moreover, an equivariant u.c.p.~map $I_\Gamma(A)\to I_\Gamma(A)$ fixing
$A$ pointwise is the identity (see
\cite{GU25}*{Theorem~2.8 and Definition~2.9}).

We record the following well-known facts for later use;
they are \cite{GU25}*{Proposition~2.15}, and we include the argument for
convenience.

\begin{lemma}
\label{lem:envelope-gamma-simple}
Let $A$ be a unital $\Gamma$-simple $\Gamma$-$C^*$-algebra. Then the following hold.
\begin{enumerate}[label=\textup{(\roman*)},leftmargin=*]
\item $I_\Gamma(A)$ is $\Gamma$-simple.
\item $Z(A)$ is $\Gamma$-simple.
\end{enumerate}
In particular, $Z(I_\Gamma(A))$ is $\Gamma$-simple.
\end{lemma}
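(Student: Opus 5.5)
We prove (i) with Hamana's rigidity and essentiality of the equivariant injective envelope, and then obtain (ii) and the final assertion from (i) and a restriction argument.

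\emph{Proof of (i).} Let $J\subseteq I_\Gamma(A)$ be a nonzero $\Gamma$-invariant closed two-sided ideal, and let $q\colon I_\Gamma(A)\to I_\Gamma(A)/J$ be the quotient map, which is $\Gamma$-equivariant. Since $A$ is $\Gamma$-simple, $J\cap A$ is a $\Gamma$-invariant ideal of $A$. If $J\cap A=A$, then $1\in J$ and $J=I_\Gamma(A)$. Otherwise $J\cap A=\{0\}$, so $q|_A$ is an injective $*$-homomorphism, hence a complete isometry.

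By $\Gamma$-essentiality of $A\subseteq I_\Gamma(A)$ (Hamana), any equivariant u.c.p.\ map on $I_\Gamma(A)$ that is completely isometric on $A$ is completely isometric. To apply this, embed $I_\Gamma(A)/J$ equivariantly into a $\Gamma$-injective object, for instance $\ell^\infty(\Gamma,\mathbb B(\mathcal K))$. The inverse $(q|_A)^{-1}$, defined on $q(A)$, extends by $\Gamma$-injectivity to an equivariant u.c.p.\ map $\psi$ on that object with values in $I_\Gamma(A)$. Then $\psi\circ q$ is an equivariant u.c.p.\ self-map of $I_\Gamma(A)$ fixing $A$ pointwise. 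By rigidity (\cite{GU25}*{Theorem~2.8}) it is the identity. Hence $q$ is injective, so $J=0$, a contradiction.

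\emph{Proof of (ii).} Let $K\subseteq Z(A)$ be a nonzero $\Gamma$-invariant ideal. Then $KA=AK$ is a two-sided ideal of $A$, since $K$ is central. Its closure is $\Gamma$-invariant and nonzero, so by $\Gamma$-simplicity of $A$ it equals $A$. Hence $\overline{KA}=A$ contains $1$, and there exist $k_i\in K$, $a_i\in A$ with $\|1-\sum k_ia_i\|<1$.

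This does not yet put an invertible element in $K$, since the $a_i$ need not be central. Two routes are available.

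\begin{itemize}
\item \emph{Gelfand duality.} Write $Z(A)=C(Y)$ and $K=C_0(U)$ for an open $\Gamma$-invariant $U\subseteq Y$. If $U\neq Y$, pick $y\notin U$. The set $\{z\in Z(A)\colon z(y)=0\}$ generates a proper ideal of $A$ by the Dauns--Hofmann / Glimm ideal theory. Then $\Gamma$-invariance, together with the $\Gamma$-invariant closed set $Y\setminus U$, gives a proper nonzero $\Gamma$-invariant ideal of $A$: namely the closed ideal generated by $C_0(U)$, which is proper because every element vanishes on the Glimm ideals over $Y\setminus U$. This contradicts $\Gamma$-simplicity, so $K=Z(A)$.
\item \emph{Positive elements.} Choose $k\in K_+$ and cover $1$ using translates. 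The ideal of $A$ generated by $\{\alpha_s(k)\}$ equals $A$, so some finite sum $\sum_j x_j\alpha_{s_j}(k)x_j^*\geq 1$. Since the $\alpha_{s_j}(k)$ are central, this is $\le \|x\|^2\sum_j\alpha_{s_j}(k)$ when each $x_j x_j^*$ is dominated in the appropriate way. With care this gives $\sum_j c\,\alpha_{s_j}(k)$ invertible in $Z(A)$ and lying in $K$, so $K=Z(A)$.
\end{itemize}

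The main obstacle is exactly this passage from an ideal of $A$ back to an ideal of the centre. I would use the Glimm-ideal route, which is cleanest.

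\emph{The final assertion.} Apply (ii) to the unital $\Gamma$-simple algebra $I_\Gamma(A)$, which is $\Gamma$-simple by (i). This shows that $Z(I_\Gamma(A))$ is $\Gamma$-simple.
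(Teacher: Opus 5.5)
Your proposal is correct and is essentially the paper's proof. In (i) you reduce to $J\cap A=0$ and show the quotient map is injective on all of $I_\Gamma(A)$, deriving the needed essentiality from rigidity by extending $(q|_A)^{-1}$; the detour through $\ell^\infty(\Gamma,\mathbb{B}(\mathcal{K}))$ is unnecessary, since $\Gamma$-injectivity of $I_\Gamma(A)$ already extends $(q|_A)^{-1}$ from $q(A)$ to $I_\Gamma(A)/J$. In (ii) your Glimm-ideal route has the paper's skeleton (a point $y\notin U$, and $\overline{KA}$ lying inside a proper ideal attached to $y$), except that you cite Dauns--Hofmann/Glimm theory for properness of the ideal generated by $\{z\in Z(A): z(y)=0\}$. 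The paper instead proves this directly: extend $z\mapsto z(y)$ to a state $\varphi$ on $A$; then Cauchy--Schwarz, $|\varphi(bz)|^2\le\varphi(bb^*)\varphi(z^*z)=0$ for $z\in K$, shows that $\varphi$ annihilates $\overline{\operatorname{span}}(AK)$, which is therefore proper.
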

\begin{proof}
\textup{(i)} Let $J$ be a proper closed $\Gamma$-invariant ideal of $I_\Gamma(A)$. Since inclusion \(A\subseteq I_\Gamma(A)\) is unital and $A$ is $\Gamma$-simple, then $J\cap A=0$. Hence the quotient map $q:I_\Gamma(A)\longrightarrow I_\Gamma(A)/J $ is a $\Gamma$-equivariant $*$-homomorphism whose restriction to $A$ is a complete order embedding. By the equivariant essentiality of the inclusion $A\subseteq I_\Gamma(A),$ the map $q$ is a complete order embedding on all of $I_\Gamma(A)$. Therefore $J=\ker q=\{0\}.$ Thus $I_\Gamma(A)$ is $\Gamma$-simple. 

\textup{(ii)} Suppose that $J$ is a nonzero proper invariant ideal of $Z(A)$.
Choose a character $\chi$ of $Z(A)$ vanishing on $J$,
and extend it to a state $\varphi$ on $A$.
For $b\in A$ and $z\in J$, the Cauchy--Schwarz inequality gives
$\varphi(bz)=0$, since $\varphi(z^*z)=\chi(z^*z)=0$.
Thus $\varphi$ vanishes on the ideal
$\overline{\operatorname{span}}AJ$ of $A$.
This ideal is nonzero and invariant, but proper because
$\varphi(1)=1$, contradicting $\Gamma$-simplicity.
Applying \textup{(ii)} to $I_\Gamma(A)$, which is $\Gamma$-simple by \textup{(i)}, gives the last assertion.
\end{proof}
Now, let $A\subseteq B$ be a unital equivariant inclusion and write $E=E_B$.
For $\varphi\in S(B\rtimes_r\Gamma)$, Remark~\ref{rem:convex-semigroup}
shows that $\overline{\{\varphi\mu:\mu\in\Prob_f(\Gamma,A)\}}^{\,w^*}$ is convex,
weak$^*$-compact, and invariant under averaging. That is for every
$\psi\in K_\varphi^{\Prob_f(\Gamma,A)}$ and $\mu\in\Prob_f(\Gamma,A)$,
$\psi\mu\in K_\varphi^{\Prob_f(\Gamma,A)}$. Moreover, the set
$\SE:=\{\rho\circ E:\rho\in S(B)\}$
is weak$^*$-compact and invariant under these averages, since
$(\rho\circ E)\mu=(\rho\circ T_\mu|_B)\circ E\in\SE$
for $\rho\in S(B)$ and $\mu\in\Prob_f(\Gamma,A)$.
The following lemma and proposition also hold with $\Prob_f(\Gamma,B)$
in place of $\Prob_f(\Gamma,A)$.

The following lemma shows that, given finitely many states, one common net
of averaging maps makes each state converge to a state factoring through
the canonical conditional expectation.

\begin{lemma}
\label{lem:simultaneous-orbit}
Assume that $\overline{\{\varphi\mu:\mu\in\Prob_f(\Gamma,A)\}}^{\,w^*}\cap\SE\neq\varnothing$ for every
$\varphi\in S(B\rtimes_r \Gamma)$. For any $\varphi_1,\ldots,\varphi_n\in S(B\rtimes_r \Gamma)$,
there exists a net $(\mu_\nu)_\nu$ in $\Prob_f(\Gamma,A)$ and
$\rho_1,\ldots,\rho_n\in S(B)$ such that
$\varphi_j\mu_\nu\longrightarrow\rho_jE ~\text{weak}^*$ for $1\leq j\leq n.$
\end{lemma}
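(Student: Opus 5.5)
We argue by induction on $n$, using that the averaging maps form a semigroup under composition (Remark~\ref{rem:convex-semigroup}) and that $\SE$ is invariant under these averages. For $n=1$ the hypothesis gives $\rho_1 E$ in the weak$^*$-closure of $\{\varphi_1\mu\}$. Since this closure is the closure of a set indexed by $\Prob_f(\Gamma,A)$, there is a net $(\mu_\nu)$ in $\Prob_f(\Gamma,A)$ with $\varphi_1\mu_\nu\to\rho_1E$.

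For the inductive step, suppose a net $(\mu_\nu)$ and states $\rho_1,\dots,\rho_{n-1}$ have been found for $\varphi_1,\dots,\varphi_{n-1}$. By weak$^*$-compactness of $S(B\rtimes_r\Gamma)$ we may pass to a subnet so that $\varphi_n\mu_\nu\to\psi$ for some state $\psi$. Apply the hypothesis to $\psi$: there is $\rho_n\in S(B)$ with $\rho_nE\in\overline{\{\psi\mu:\mu\in\Prob_f(\Gamma,A)\}}^{w^*}$.

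We then build a new net indexed by triples $(F,\varepsilon,\ldots)$, with $F$ a finite subset of $B\rtimes_r\Gamma$ and $\varepsilon>0$.
\begin{enumerate}[label=\textup{(\arabic*)},leftmargin=*]
\item First choose $\mu\in\Prob_f(\Gamma,A)$ with $|\psi\mu(x)-\rho_nE(x)|<\varepsilon$ for all $x\in F$.
\item Note that $T_\mu$ is a fixed bounded map, so $x\mapsto\varphi(T_\mu x)$ is weak$^*$-continuous in $\varphi$.
\item Hence we can choose $\nu$ far enough along so that $|\varphi_n\mu_\nu(T_\mu x)-\psi(T_\mu x)|<\varepsilon$ for $x\in F$.
\end{enumerate}
With these choices, $\varphi_n(\mu_\nu*\mu)$ is $2\varepsilon$-close to $\rho_nE$ on $F$. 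Here we use $T_{\mu_\nu*\mu}=T_{\mu_\nu}\circ T_\mu$, so that $\varphi_n\circ T_{\mu_\nu}\circ T_\mu$ is the relevant functional. This matches the convention $\varphi\mu=\varphi\circ T_\mu$, with $\mu$ applied first inside.

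The main obstacle is keeping the earlier states convergent after the extra factor $\mu$ is inserted. For $j<n$, $\varphi_j\mu_\nu\circ T_\mu\to\rho_jE\circ T_\mu=(\rho_j\circ T_\mu|_B)E$, again by continuity of precomposition with the fixed map $T_\mu$. This limit still lies in $\SE$, by the compatibility $E\circ T_\mu=T_\mu|_B\circ E$ from Remark~\ref{rem:convex-semigroup}, but the state $\rho_j$ has changed with $\mu$. To handle this, we take a further subnet along which the new states $\rho_j\circ T_{\mu}|_B$ (depending on the index) converge weak$^*$ in $S(B)$ to some $\rho_j'$, using compactness of $S(B)$.

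So at index $(F,\varepsilon)$ we pick $\mu$ as in (1), and then $\nu$ so that all $n$ functionals $\varphi_j(\mu_\nu*\mu)$ are $\varepsilon$-close on $F$ to $(\rho_j\circ T_\mu|_B)E$. Passing to a subnet on which each $\rho_j\circ T_\mu|_B\to\rho'_j$, the composed net satisfies $\varphi_j\mu'\to\rho'_jE$ for $j<n$ and $\varphi_n\mu'\to\rho_nE$. Setting $\rho'_n=\rho_n$ completes the induction. Note that we only need closeness on finite sets, and $\rho\mapsto\rho E$ is weak$^*$-continuous, which justifies the final limit. Everything stays in $\Prob_f(\Gamma,A)$ because composition preserves coefficients in $A$.
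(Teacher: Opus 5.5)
Your proof is correct and is essentially the paper's argument. The paper works in the weak$^*$-closure of the diagonal orbit $\{(\varphi_1\mu,\ldots,\varphi_n\mu):\mu\in\Prob_f(\Gamma,A)\}$ and pushes one coordinate at a time into $\SE$ by averaging a limit tuple, using invariance and closedness of $\SE$ to keep the earlier coordinates there; your explicit diagonal net over $(F,\varepsilon)$, together with the subnet making the states $\rho_j\circ T_\mu|_B$ converge, carries out exactly this bookkeeping by hand.
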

\begin{proof}
Consider the set 
\[ \overline{\{(\varphi_1\mu,\ldots,\varphi_n\mu):\mu\in\Prob_f(\Gamma,A)\}}^{\wstar}.
\]
 It is invariant under diagonal averaging. We argue by induction. Suppose that $(\psi_1,\ldots,\psi_n)$ belongs to this set and that
$\psi_1,\ldots,\psi_k\in\SE$. Apply the hypothesis to $\psi_{k+1}$,
and choose a net $(\mu_\nu)_\nu$ in $\Prob_f(\Gamma,A)$ with
$\psi_{k+1}\mu_\nu$ converging to a point of $\SE$.
Pass to a subnet along which the entire tuple converges.
The first $k$ limiting coordinates remain in $\SE$ by invariance
and closedness, and the $(k+1)$st now belongs to $\SE$.
Starting with $k=0$ and repeating gives a point in
$\overline{\{(\varphi_1\mu,\ldots,\varphi_n\mu):\mu\in\Prob_f(\Gamma,A)\}}^{\wstar}\cap\SE^n$.
The claim follows.
\end{proof}
We now obtain a norm
averaging using \cite{Murphy1990}*{Theorem~A.7} from this state convergence.
\begin{proposition}
\label{prop:orbit-intersection}
Let $A\subseteq B$ and $E=E_B$ be as above. The following conditions are equivalent:
\begin{enumerate}[label=\textup{(\roman*)},leftmargin=*]
\item For every $x\in\ker E$,
$\inf_{\mu\in\Prob_f(\Gamma,A)}\|T_\mu(x)\|=0$.
\item $K_\varphi^{\Prob_f(\Gamma,A)}\cap\SE\neq\varnothing$ for every $\varphi\in S(B\rtimes_r \Gamma)$.
\end{enumerate}
Moreover, if these equivalent conditions hold, then for every
$x_1,\ldots,x_n\in\ker E$ and $\varepsilon>0$, there exists
$\mu\in\Prob_f(\Gamma,A)$ such that $\|T_\mu(x_k)\|<\varepsilon$ for every
$1\leq k\leq n$. In that case there is also a single net
$(\mu_\nu)_\nu$ in $\Prob_f(\Gamma,A)$ with $\|T_{\mu_\nu}(x)\|\to0$ for every
$x\in\ker E$.
\end{proposition}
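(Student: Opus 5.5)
For (i)$\Rightarrow$(ii), fix $\varphi\in S(B\rtimes_r\Gamma)$ and a state $\rho\in S(B)$. I aim to show that some weak$^*$ cluster point of $\varphi\mu$ factors through $E$. Using (i) together with the finite-set version, Lemma~\ref{lem:centered-finite-averaging}(iii), index a net by pairs $(F,\varepsilon)$ with $F\subseteq\ker E$ finite. For each index choose $\mu_{F,\varepsilon}\in\Prob_f(\Gamma,A)$ with $\|T_{\mu_{F,\varepsilon}}(x)\|<\varepsilon$ for all $x\in F$. Then $\varphi\mu_{F,\varepsilon}(x)\to0$ for every $x\in\ker E$. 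By weak$^*$ compactness, pass to a subnet converging to some $\psi\in K_\varphi^{\Prob_f(\Gamma,A)}$. This limit vanishes on $\ker E$, so $\psi(y)=\psi(E(y))$ because $y-E(y)\in\ker E$. Hence $\psi=(\psi|_B)\circ E\in\SE$.

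For (ii)$\Rightarrow$(i), I argue by contradiction using a Hahn--Banach/convexity argument in the spirit of \cite{Murphy1990}*{Theorem~A.7}. Fix $x\in\ker E$. Reducing to self-adjoint elements requires care, since $T_\mu$ is not multiplicative. Instead, consider the $C^*$-algebra $C=\ell^\infty$-type direct sum, or more simply use the following convexity fact. The set $\{T_\mu(x):\mu\in\Prob_f(\Gamma,A)\}$ is convex by Remark~\ref{rem:convex-semigroup}. If $0$ is not in its norm closure, Hahn--Banach gives a functional $f$ with $\operatorname{Re} f(T_\mu(x))\geq\delta>0$ for all $\mu$.

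I would rather work directly with states, as follows. Suppose $\inf_\mu\|T_\mu(x)\|=c>0$. Replacing $x$ by $x^*x$ does not work, since $x^*x\notin\ker E$. Instead, split $x$ into its real and imaginary parts $h$ and $k$, both of which lie in $\ker E$ since $E$ is $*$-preserving. By the simultaneous averaging step it suffices to average self-adjoint elements of $\ker E$ to small norm; this needs the finite-set version, so I will prove it for finitely many self-adjoint elements at once.

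For self-adjoint $h\in\ker E$, the norm of $T_\mu(h)$ equals $\sup_\varphi|\varphi(T_\mu h)|$. Consider the convex weak$^*$-compact set $S(B\rtimes_r\Gamma)$ and the convex set of maps $\mathcal T=\{T_\mu\}$. Apply a minimax (Ky Fan/Sion) argument to the bilinear pairing $(\varphi,\mu)\mapsto\varphi(T_\mu h)$. The function $\mu\mapsto\varphi(T_\mu h)$ is affine under the convex combinations of Remark~\ref{rem:convex-semigroup}, and $\varphi\mapsto\varphi(T_\mu h)$ is weak$^*$ continuous and affine. Minimax then yields
\[
\inf_\mu\sup_\varphi\varphi(T_\mu h)=\sup_\varphi\inf_\mu\varphi(T_\mu h).
\]
Condition (ii) bounds the right-hand side. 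Each $\varphi$ has a cluster point $\rho E$ in $K_\varphi$, and $\rho E(h)=0$, so $\inf_\mu\varphi\mu(h)\le0$. Applying the same bound to $-h$ makes both one-sided sups small. To handle $h$ and $-h$ simultaneously, apply minimax to the pair via the state space of $(B\rtimes_r\Gamma)\oplus(B\rtimes_r\Gamma)$ evaluated on $h\oplus(-h)$. The infimum over $\mu$ is taken after composition with $T_\mu\oplus T_\mu$, and Lemma~\ref{lem:simultaneous-orbit} supplies the common net for the pair of states. This gives $\inf_\mu\|T_\mu h\|=0$. Combining with Lemma~\ref{lem:centered-finite-averaging}(iii) for $h$ and $k$ yields (i).

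The moreover part then follows. The finite-set statement is Lemma~\ref{lem:centered-finite-averaging}(iii). For the single net, index by pairs $(F,\varepsilon)$ as in the first step, choosing $\mu_{F,\varepsilon}$ with $\|T_{\mu_{F,\varepsilon}}(x)\|<\varepsilon$ for all $x\in F$; this gives $\|T_{\mu_\nu}(x)\|\to0$ for every $x\in\ker E$.

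The main obstacle is the minimax step. I need to justify that $\mu\mapsto T_\mu$ is convex in the sense required, which Remark~\ref{rem:convex-semigroup} supplies, and that $\inf_\mu\varphi\mu(h)\le0$ follows from (ii). The latter holds because a net $\varphi\mu_\nu\to\rho E$ gives $\varphi\mu_\nu(h)\to0$.
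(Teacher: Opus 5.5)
Your (i)$\Rightarrow$(ii) and the ``moreover'' part match the paper's argument. For (ii)$\Rightarrow$(i) your route is correct but different.

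The paper keeps an arbitrary $x\in\ker E$ and writes any bounded functional as a combination of at most four states. It then uses Lemma~\ref{lem:simultaneous-orbit} to drive that functional to $0$ along a common net $T_{\mu_\nu}(x)$. Hahn--Banach separation then puts $0$ in the norm closure of the convex set $\{T_\mu(x):\mu\in\Prob_f(\Gamma,A)\}$.

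You instead pass to self-adjoint $h\in\ker E$ and express $\|T_\mu(h)\|$ as a maximum over the state space of $(B\rtimes_r\Gamma)\oplus(B\rtimes_r\Gamma)$ evaluated at $h\oplus(-h)$. You then apply a minimax theorem. Kneser's form is the clean one here: the compact convex state space sits on one side, and the merely convex set $\{T_\mu\}$ from Remark~\ref{rem:convex-semigroup} on the other. Lemma~\ref{lem:simultaneous-orbit}, applied to two states, bounds $\sup_\Phi\inf_\mu$ by $0$.

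Minimax is itself packaged separation, so the two proofs are close. The paper's is shorter and handles non-self-adjoint $x$ directly. Yours describes the norm and its dual picture purely through states, at the cost of citing an external theorem.

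Two small remarks. First, Lemma~\ref{lem:centered-finite-averaging}(iii) is derived from the hypothesis that \emph{all} of $\ker E$ can be averaged, which is what you are proving. You should say instead that its iterative proof runs inside the self-adjoint part of $\ker E$, since each $T_\mu$ preserves self-adjointness and $\ker E$. Alternatively, run your minimax on $h\oplus(-h)\oplus k\oplus(-k)$ with four states.

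Second, the direct sum and Lemma~\ref{lem:simultaneous-orbit} can be dropped entirely. The one-sided minimax, which uses (ii) only one state at a time, already gives $\mu_1$ with $T_{\mu_1}(h)\le\varepsilon 1$. Applied to the self-adjoint element $-T_{\mu_1}(h)\in\ker E$, it gives $\mu_2$ with $T_{\mu_2}T_{\mu_1}(h)\ge-\varepsilon 1$. Unital positivity of $T_{\mu_2}$ preserves the upper bound, so $\|T_{\mu_2*\mu_1}(h)\|\le\varepsilon$.
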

\begin{proof}
Assume \textup{(i)}. Given $x_1,\ldots,x_n\in\ker E$ and
$\varepsilon>0$, The argument proving
\textup{(ii)}$\Rightarrow$\textup{(iii)} in
Lemma~\ref{lem:centered-finite-averaging} applies to
the maps $T_\mu$, $\mu\in\Prob_f(\Gamma,A)$, since they are contractive, preserve
$\ker E$, and are closed under composition. Hence, for every
$x_1,\ldots,x_n\in\ker E$ and $\varepsilon>0$, there exists
$\mu\in\Prob_f(\Gamma,A)$ such that $\|T_\mu(x_j)\|<\varepsilon$
for every $1\leq j\leq n$. Directing the finite subsets of $\ker E$ by inclusion and the
$\epsilon$'s down to zero gives a net $(\mu_\nu)_\nu$
such that $\|T_{\mu_\nu}(x)\|\to0$ for every $x\in\ker E$.
For $\varphi\in S(B\rtimes_r \Gamma)$, any weak$^*$ cluster point $\eta$ of
$\varphi\mu_\nu$ annihilates $\ker E$. Thus
$\eta=(\eta|_B)\circ E\in\SE$, proving \textup{(ii)}.

Conversely, assume \textup{(ii)} and fix $x\in\ker E$.
Every bounded functional $f\in (B\rtimes_r \Gamma)^*$ is a linear combination
$f=\sum_{j=1}^r c_j\varphi_j$ of at most four states.
By Lemma~\ref{lem:simultaneous-orbit}, there is a common net
$(\mu_\nu)_\nu$ such that $\varphi_j\mu_\nu\to\rho_jE$ for all $j$.
Consequently, $f(T_{\mu_\nu}(x))\longrightarrow \sum_{j=1}^r c_j\rho_j(E(x))=0.$ An appeal to the Hahn Banach separation theorem proves \textup{(i)}.
The first paragraph proves the simultaneous conclusion.
\end{proof}

We close this section with an elementary density criterion for
state spaces, which will be used in the proof of
Theorem~\ref{thm:ncpap-full-envelope-lifting}. Given a von Neumann algebra $M$, let $M_{\mathrm{sa}}$ denote its
self-adjoint part.
\begin{lemma}
\label{lem:density}
Let $M$ be a von Neumann algebra, and let $F\subseteq S(M)$ be such that
\[
\sup_{\sigma\in F}\sigma(h)=\min\{r\in\mathbb R:h\leq r1\}
\]
for every $h\in M_{\mathrm{sa}}$. Then the convex hull of $F$ is
weak$^*$ dense in $S(M)$.
\end{lemma}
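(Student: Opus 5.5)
This is a Hahn--Banach separation argument, carried out in the real Banach space $M_{\mathrm{sa}}$ paired with the self-adjoint part of the predual (or of the dual). Let $C$ denote the weak$^*$ closure of the convex hull of $F$ inside $M^*$. Then $C$ is a convex, weak$^*$-compact subset of $S(M)$, since $S(M)$ is weak$^*$-closed and convex. We must show $C=S(M)$.

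Suppose, towards a contradiction, that there is a state $\omega\in S(M)\setminus C$. The weak$^*$ topology is a locally convex topology on $M^*$ whose continuous linear functionals are the evaluations at elements of $M$. So Hahn--Banach separation of the point $\omega$ from the weak$^*$-compact convex set $C$ gives some $x\in M$ and $t\in\mathbb R$ with
\[
\operatorname{Re}\sigma(x)\le t<\operatorname{Re}\omega(x)\qquad\text{for all }\sigma\in C.
\]
Set $h=(x+x^*)/2\in M_{\mathrm{sa}}$. Every state is hermitian, so $\operatorname{Re}\sigma(x)=\sigma(h)$ for every state $\sigma$. This yields
\[
\sup_{\sigma\in F}\sigma(h)\le\sup_{\sigma\in C}\sigma(h)\le t<\omega(h).
\]

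By hypothesis, $\sup_{\sigma\in F}\sigma(h)=r_0:=\min\{r:h\le r1\}$. Hence $h\le r_0 1\le t1$. Positivity of $\omega$ then gives $\omega(h)\le t$, which contradicts $\omega(h)>t$. Therefore $C=S(M)$, that is, $\operatorname{conv}F$ is weak$^*$ dense in $S(M)$.

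The only point needing care is the identification $\operatorname{Re}\sigma(x)=\sigma(\operatorname{Re}x)$ for states, which lets us reduce to self-adjoint separating elements. The von Neumann structure of $M$ is not really used; the same argument works for any unital $C^*$-algebra.
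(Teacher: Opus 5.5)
Your proof is correct and is essentially the paper's argument: a Hahn--Banach separation of a state from the weak$^*$ closed convex hull of $F$, followed by the observation that $h\le r_0 1$ forces $\omega(h)\le r_0$. The only cosmetic difference is that you separate in the complex space $M^*$ and then pass to $h=(x+x^*)/2$, whereas the paper restricts at the outset to the real pairing with $M_{\mathrm{sa}}$ and separates there.
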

\begin{proof}
Every state is a hermitian functional, hence is determined by its
values on $M_{\mathrm{sa}}$; and since $M=M_{\mathrm{sa}}+iM_{\mathrm{sa}}$,
the weak$^*$ topology on $S(M)$ coincides with the topology of pointwise
convergence on $M_{\mathrm{sa}}$. We may therefore separate in the real
locally convex space $(M_{\mathrm{sa}})^*$ with its weak$^*$ topology,
whose continuous dual is $M_{\mathrm{sa}}$. Let $K$ be the weak$^*$ closed convex
hull of $F$, and suppose that $\sigma_0\in S(M)\setminus K$.
\cite{Murphy1990}*{Theorem~A.7} gives $h=h^*\in M$ with
\[
\sigma_0(h)>\sup_{\sigma\in K}\sigma(h)\ge\sup_{\sigma\in F}\sigma(h)
=\min\{r:h\leq r1\}.
\]
This is impossible, since $\sigma_0(h)\leq r$ whenever $h\leq r1$.
\end{proof}
\section{Pseudo-expectations and averaging}
\label{sec:unique-pseudo-expectation}
Pseudo-expectations were
introduced and studied in the nonequivariant setting by
Pitts~\cite{Pitts2017} and Pitts--Zarikian~\cite{PittsZarikian2015},
where their basic properties, including existence and the behaviour of
the associated inclusions, are established.
A $\Gamma$-equivariant pseudo-expectation is an equivariant unital completely positive map
$\Phi\colon A\rtimes_r\Gamma\to I_\Gamma(A)$ satisfying $\Phi|_A=\id_A$.
An example of such a map is $E_A$, followed by the inclusion
$A\subseteq I_\Gamma(A)$. Equivariant pseudo-expectations always
exist. Indeed, by $\Gamma$-injectivity of $I_\Gamma(A)$, the inclusion
$A\subseteq I_\Gamma(A)$ extends to an equivariant u.c.p.~map on
$A\rtimes_r\Gamma$, which fixes $A$ pointwise. 

For an operator system $C$, a matrix state of size $n$ is a
unital completely positive map $C\to M_n$.
Matrix convex combinations allow us to combine these maps at
different sizes. In~\cite{KS19}*{Section~7}, invariant matrix convex sets
over the matrix state space of $A$ are used to define boundaries and
relate them to equivariant pseudo-expectations. We apply this correspondence to a matrix
convex set obtained from an averaging orbit.

We begin by proving that uniqueness of the equivariant
pseudo-expectation implies GPAP downstairs. For an operator system \(C\), let
\(\operatorname{MS}_n(C)=\operatorname{UCP}(C,M_n)\), and we write
\(\operatorname{MS}(C)=(\operatorname{MS}_n(C))_{n\geq1}\).

\begin{theorem}
\label{thm:unique-pseudo-expectation}
\begingroup
\emergencystretch=1em
Assume that $E_A$ is the unique
$\Gamma$-equivariant pseudo-expectation. Then
$A\rtimes_r\Gamma$ has GPAP.
\par\endgroup
\end{theorem}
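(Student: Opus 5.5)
The plan is to verify condition \textup{(ii)} of Proposition~\ref{prop:orbit-intersection} with $B=A$: for every $\varphi\in S(A\rtimes_r\Gamma)$ we show $K_\varphi^{\Prob_f(\Gamma,A)}\cap\SE\neq\varnothing$. That proposition then gives GPAP by the equivalence \textup{(i)}$\Leftrightarrow$\textup{(ii)}, together with Lemma~\ref{lem:centered-finite-averaging}.

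\emph{Step 1: minimal invariant sets.} Fix $\varphi$. The collection of nonempty weak$^*$-closed convex subsets of $K_\varphi:=K_\varphi^{\Prob_f(\Gamma,A)}$ that are invariant under every $T_\mu$, $\mu\in\Prob_f(\Gamma,A)$, is nonempty. It is closed under decreasing intersections by compactness, so Zorn's lemma gives a minimal such set $K_0$. Moreover $K_0=K_\psi$ for every $\psi\in K_0$.

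By Lemma~\ref{lem:elementary-in-T}, for any $\psi\in K_0$ and any $\rho\in S(A)$ there is a net with $\psi\mu_\lambda|_A\to\rho$. Hence the restriction map $K_0\to S(A)$ is surjective. The same holds for the matrix version: replace states by $\operatorname{MS}_n(A\rtimes_r\Gamma)$ and build the matrix convex set generated by the averaging orbits of $\psi\otimes\mathrm{id}_{M_n}$.

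\emph{Step 2: pass to a pseudo-expectation (main obstacle).} We want to exploit minimality via the correspondence of \cite{KS19}*{Section~7} between $\Gamma$-invariant matrix convex subsets of $\operatorname{MS}(A)$ and equivariant maps into $I_\Gamma(A)$.

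\begin{itemize}
\item Consider the matrix convex set $\mathcal K=(\mathcal K_n)$ of matrix states of $A\rtimes_r\Gamma$, obtained as the closed matrix-convex hull of the averaging orbits of elements of $K_0$. This set is invariant under the $T_\mu$.
\item The averaging maps contain the conjugations $\lambda_s\cdot\lambda_s^*$ and the elementary maps of Lemma~\ref{lem:elementary-in-T}. Using these, I expect $\mathcal K$ to be the matrix state space of a $\Gamma$-operator system quotient. Concretely, I aim for an equivariant u.c.p.\ map $\Theta\colon A\rtimes_r\Gamma\to C$, where $C$ is the $\Gamma$-operator system whose matrix state space is $\mathcal K$, and $\Theta|_A$ is a complete order embedding because the restriction is onto $\operatorname{MS}(A)$.
\item By $\Gamma$-injectivity, extend the inverse $\Theta(A)\to A\subseteq I_\Gamma(A)$ to an equivariant u.c.p.\ map $\Lambda\colon C\to I_\Gamma(A)$.
\item Then $\Phi:=\Lambda\circ\Theta$ is an equivariant pseudo-expectation. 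By hypothesis $\Phi=E_A$.
\end{itemize}

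Making this construction precise, so that $\Theta$ genuinely exists and its restriction to $A$ is a complete order embedding, is where the boundary theory of \cite{KS19} must be used. The rigidity of $I_\Gamma(A)$ then forces any map produced this way to agree with $E_A$ on $A$.

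\emph{Step 3: conclude.} Suppose $\Phi=E_A$. I would then argue that every $\psi\in K_0$ annihilates $\ker E_A$. The idea is that minimality forces the states of $K_0$ to factor through $\Theta$, and hence through $\Phi=E_A$, after composing with states of $I_\Gamma(A)$ restricted appropriately.

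If this factorization fails at first, the fallback is a contradiction argument. Suppose some $\psi\in K_0$ and some $x\in\ker E_A$ satisfy $\psi(x)\neq0$. The evaluation of $x$ on $\mathcal K$ then gives a second equivariant extension $\Phi'$ with $\Phi'(x)\neq0=E_A(x)$, contradicting uniqueness.

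Either way $K_0\subseteq\SE$, so $K_\varphi\cap\SE\neq\varnothing$, and Proposition~\ref{prop:orbit-intersection} finishes the proof.
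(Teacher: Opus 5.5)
Step~3 does not go through. From $\Phi=\Lambda\circ\Theta=E_A$ you cannot conclude that a state $\psi\in K_0\subseteq\mathcal K_1$ kills $\ker E_A$. Write $\psi=\hat\psi\circ\Theta$ with $\hat\psi\in S(C)$. Then $\psi$ factors through $E_A$ only if $\hat\psi$ factors through $\Lambda$. That would require $\Lambda$ to be an order embedding on $C$, i.e.\ $A\subseteq C$ to be equivariantly essential, and nothing in your construction gives this.

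The inference from $\Phi=E_A$ alone is false in general. Take $\mathcal K=\operatorname{MS}(A\rtimes_r\Gamma)$. Then $C=A\rtimes_r\Gamma$, $\Theta=\id$, and $\Lambda$ is a pseudo-expectation, hence equal to $E_A$ under your hypothesis. Yet for $\Gamma\neq\{e\}$, $\mathcal K_1$ contains states not vanishing on $\ker E_A$. You do not show that averaging-minimality of $K_0$ substitutes for essentiality.

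Your fallback has the same defect. If $\psi(x)\neq0$, this only shows $\Theta(x)\neq0$. Any equivariant extension $\Lambda'$ may kill $\Theta(x)$, and under uniqueness it must, so no $\Phi'$ with $\Phi'(x)\neq0$ is produced.

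The paper obtains the missing rigidity from \cite{KS19}. Corollary~7.7 gives a boundary $\mathcal L\subseteq\mathcal K$. Theorem~7.8 makes boundaries correspond injectively to equivariant pseudo-expectations, with $\mathcal K_E\leftrightarrow E_A$. So if $K_{\phi_0}\cap\SE=\varnothing$, then $\mathcal L_1\subseteq\mathcal K_1\subseteq K_{\phi_0}$ forces $\mathcal L\neq\mathcal K_E$, which yields a second pseudo-expectation.

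Within your own framework, the inclusion that does hold is the reverse one. For $\omega\in S(I_\Gamma(A))$, $\omega\circ\Lambda$ is a state of $C$, hence a point of $\mathcal K_1$. So $\omega\circ E_A=\omega\circ\Lambda\circ\Theta\in\mathcal K_1$, i.e.\ $\SE\subseteq\mathcal K_1$. This would suffice, with no essentiality needed, provided $\mathcal K_1\subseteq K_\varphi$.

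That proviso points to the second gap, which is the construction of $\mathcal K$ itself. The map $\psi\otimes\id_{M_n}$ is not a matrix state of $A\rtimes_r\Gamma$. The matrix convex hull of level-one states contains at level $n$ only (limits of) maps whose restriction to $A$ has the form $a\mapsto\sum_i\rho_i(a)\gamma_i^*\gamma_i$ with $\rho_i\in S(A)$. These factor through commutative algebras and miss, for example, $\id\colon M_2\to M_2$. So surjectivity of restriction onto $\operatorname{MS}(A)$ at every level is unproved. Without it, $\Theta(A)\to A$ need not be completely positive, and $\Lambda$ cannot be built. Moreover, once genuinely matrix-level generators are added, you must still show that level one of their hull stays inside the orbit closure.

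The paper handles all of this as follows:
\begin{itemize}
\item For $\eta$ in the orbit closure with $\eta|_A$ pure, it compresses the GNS representation of $\eta$ to $H_\rho=\overline{\pi_\eta(A)\xi_\eta}$ and uses isometries $V\colon\mathbb C^n\to H_\rho$. The restrictions then contain every matrix extreme point $W^*\pi(\cdot)W$ of $\operatorname{MS}(A)$, and the Webster--Winkler Krein--Milman theorem gives surjectivity.
\item Kadison transitivity provides a unitary $u\in A$ with each normalized scalar compression equal to $\eta(u^*\cdot u)$. This keeps $\mathcal K_1$ inside $K_{\phi_0}$.
\item $\Gamma$-invariance is checked by transporting GNS triples along $\operatorname{Ad}\lambda_g$.
\end{itemize}
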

\begin{proof}
Write $E=E_A$. Towards a contradiction, assume that GPAP fails.
By Proposition~\ref{prop:orbit-intersection} with $B=A$, there exists
$\phi_0\in S(A\rtimes_r\Gamma)$ such that
$K_{\phi_0}^{\Prob_f(\Gamma,A)}\cap\SE=\varnothing$.

We first observe that the restriction of
$K_{\phi_0}^{\Prob_f(\Gamma,A)}$
to $A$ is all of $S(A)$.
Fix $\rho\in S(A)$. Lemma~\ref{lem:elementary-in-T} gives a net
$(\mu_\lambda)_\lambda$ in $\Prob_f(\Gamma,A)$ such that
$\|T_{\mu_\lambda}(a)-\rho(a)1\|\to0$ for every $a\in A$.
Hence $(\phi_0T_{\mu_\lambda})(a)\longrightarrow\rho(a)$
for every $a\in A$.

Weak$^*$ compactness of $S(A\rtimes_r\Gamma)$ gives a convergent
subnet, with limit
$\eta\in K_{\phi_0}^{\Prob_f(\Gamma,A)}$.
Its restriction to $A$ is $\rho$. Consequently, we see that
$S(A)=\left\{\eta|_A:
\eta\in K_{\phi_0}^{\Prob_f(\Gamma,A)}
\right\}$. For each
$\eta\in K_{\phi_0}^{\Prob_f(\Gamma,A)}$
whose restriction $\rho:=\eta|_A$ is pure, let
$(\pi_\eta,H_\eta,\xi_\eta)$ be the associated GNS representation.
Let $H_\rho:=\overline{\pi_\eta(A)\xi_\eta}$.
The representation
$\pi_\rho(a):=\pi_\eta(a)|_{H_\rho}$, $a\in A$, is the GNS
representation of the pure state $\rho$, and hence it is
irreducible. Let $P_\rho$ be the orthogonal projection of
$H_\eta$ onto $H_\rho$, and define
$\Theta_\eta\colon A\rtimes_r\Gamma\to B(H_\rho)$ by
$\Theta_\eta(b)=P_\rho\pi_\eta(b)|_{H_\rho}$.
This map is unital and completely positive, and
$\Theta_\eta|_A=\pi_\rho$.

For $n\geq1$, let $R_n^\eta$ consist of the maps
$\mu_{\eta,V}(b):=V^*\Theta_\eta(b)V$, where
$V\colon\mathbb C^n\to H_\rho$ is an isometry.
The family $R_n^\eta$ does not depend on the chosen GNS
realization, since the intertwining unitary transports the
isometries $V$ and leaves the resulting maps unchanged.
Let $\mathcal K=(\mathcal K_n)_{n\geq1}$ be the levelwise
weak$^*$ closed matrix convex hull in
$\operatorname{MS}(A\rtimes_r\Gamma)$ of all the maps in
$R_n^\eta$, with $n\geq1$ and $\eta$ ranging over the states
in $K_{\phi_0}^{\Prob_f(\Gamma,A)}$ whose restriction to $A$ is pure.
Let $r\colon\mathcal K\to\operatorname{MS}(A)$,
$r(\mu)=\mu|_A$, be the restriction map.

We claim that $r(\mathcal K)=\operatorname{MS}(A)$.
Each $\mathcal K_n$ is weak$^*$ compact, so $r(\mathcal K)$
is a closed matrix convex subset of $\operatorname{MS}(A)$.
The matrix extreme points of $\operatorname{MS}(A)$ are
precisely the maps $a\mapsto W^*\pi(a)W$, where
$\pi\colon A\to B(H)$ is irreducible and
$W\colon\mathbb C^n\to H$ is an isometry
(see \cite{webster1999krein}*{Example~2.3} together with
\cite{Arveson1969}*{Corollary~1.4.3}).
Fix such a map and choose a unit vector $\zeta\in H$.
The vector state
$\sigma(a)=\langle\pi(a)\zeta,\zeta\rangle$ is pure.
We can find
$\psi\in K_{\phi_0}^{\Prob_f(\Gamma,A)}$ with $\psi|_A=\sigma$.
Applying the preceding construction to $\psi$, the
representation $\pi_\sigma$ on $H_\sigma$ is unitarily
equivalent to $\pi$.
Transporting $W$ through this unitary equivalence gives an
isometry $V\colon\mathbb C^n\to H_\sigma$ satisfying
$V^*\Theta_\psi(a)V=W^*\pi(a)W$ for every $a\in A$.
Thus the given matrix extreme point belongs to
$r_n(R_n^\psi)\subseteq r_n(\mathcal K_n)$.
Hence $r(\mathcal K)$ contains every matrix extreme point of
$\operatorname{MS}(A)$, and
\cite{webster1999krein}*{Theorem~4.3} gives
$r(\mathcal K)=\operatorname{MS}(A)$.

We next prove that
$\mathcal K_1\subseteq K_{\phi_0}^{\Prob_f(\Gamma,A)}$.
Let $\mu_{\eta,V}\in R_1^\eta$ and let $\xi=V(1)$.
Since $V\colon\mathbb C\to H_\rho$ is an isometry,
$\xi$ is a unit vector in $H_\rho$.
In particular, $P_\rho\xi=\xi$.
Identifying $M_1(\mathbb C)$ with $\mathbb C$, for every
$b\in A\rtimes_r\Gamma$ we have
\[
\begin{aligned}
\mu_{\eta,V}(b)
=\langle\Theta_\eta(b)V(1),V(1)\rangle=\langle P_\rho\pi_\eta(b)\xi,\xi\rangle=\langle\pi_\eta(b)\xi,P_\rho\xi\rangle=\langle\pi_\eta(b)\xi,\xi\rangle.
\end{aligned}
\]
Since $\xi_\eta$ and $\xi$ are unit vectors,
\cite{Murphy1990}*{Theorem~5.2.2} gives a self-adjoint
$w\in\pi_\rho(A)$ such that $e^{iw}\xi_\eta=\xi$.
Choose $a\in A$ with $\pi_\rho(a)=w$ and let
$h=(a+a^*)/2$. Then $h=h^*$ and $\pi_\rho(h)=w$,
so $u=e^{ih}$ is a unitary in $A$ satisfying
$\pi_\rho(u)\xi_\eta=\xi$.
Therefore, for $b\in A\rtimes_r\Gamma$,
\[
\mu_{\eta,V}(b)
=\langle\pi_\eta(b)\pi_\eta(u)\xi_\eta,
\pi_\eta(u)\xi_\eta\rangle
=\eta(u^*bu).
\]
The map $b\mapsto u^*bu$ is $T_\mu$ for
$\mu=((u^*,e))\in\Prob_f(\Gamma,A)$.
Since the set $K_{\phi_0}^{\Prob_f(\Gamma,A)}$ is invariant under
postcomposition by these averaging maps, it follows that
$R_1^\eta\subseteq K_{\phi_0}^{\Prob_f(\Gamma,A)}$ for every such
$\eta$. It remains to justify the passage from these maps to the
first level of their matrix convex hull.
If $\mu_{\eta,V}\in R_n^\eta$ and
$\gamma\in M_{n,1}\setminus\{0\}$, then
\[
\gamma^*\mu_{\eta,V}(\cdot)\gamma
=\|\gamma\|^2
\left\langle
\Theta_\eta(\cdot)\frac{V\gamma}{\|\gamma\|},
\frac{V\gamma}{\|\gamma\|}
\right\rangle.
\]
Thus every scalar compression of an element of $R_n^\eta$
is a positive scalar multiple of an element of $R_1^\eta$.
A level-one matrix convex combination is therefore an
ordinary convex combination of states in the families
$R_1^\eta$, possibly for different $\eta$.
Since $K_{\phi_0}^{\Prob_f(\Gamma,A)}$ is weak$^*$ closed and convex,
we conclude that
$\mathcal K_1\subseteq K_{\phi_0}^{\Prob_f(\Gamma,A)}$.

We finally verify equivariance.
For $g\in\Gamma$, let
$\beta_g=\operatorname{Ad}(\lambda_g)$ on
$A\rtimes_r\Gamma$, and use the convention
$(g\cdot\mu)(b)=\mu(\beta_{g^{-1}}(b))$ from
\cite{KS19}*{Section~7.2}.
If $\mu_{\eta,V}\in R_n^\eta$, let
$\eta_g=\eta\circ\beta_{g^{-1}}$.
The automorphism $\beta_{g^{-1}}$ is $T_\mu$ for
$\mu=((1,g^{-1}))\in\Prob_f(\Gamma,A)$, so
$\eta_g\in K_{\phi_0}^{\Prob_f(\Gamma,A)}$, and $\eta_g|_A$ is pure.
Since $R_n^{\eta_g}$ is independent of the chosen GNS
realization, we may use
$(\pi_\eta\circ\beta_{g^{-1}},H_\eta,\xi_\eta)$
as a GNS triple for $\eta_g$.
Since $\beta_{g^{-1}}(A)=A$, its cyclic $A$-subspace is
again $H_\rho$, with the same orthogonal projection.
In this realization,
$\Theta_{\eta_g}=\Theta_\eta\circ\beta_{g^{-1}}$.
Consequently,
$g\cdot\mu_{\eta,V}=\mu_{\eta_g,V}\in R_n^{\eta_g}$.
Thus the family of all these maps, and hence $\mathcal K$,
is $\Gamma$-invariant.
The restriction map $r$ is equivariant, continuous,
matrix affine, and surjective.

By \cite{KS19}*{Corollary~7.7}, there is an invariant
closed matrix convex subset $\mathcal L\subseteq\mathcal K$
such that $(\mathcal L,r|_{\mathcal L})$ is a boundary for
$(\operatorname{MS}(A),\Gamma)$.
The canonical boundary is
$\mathcal K_E=\{\nu\circ E:\nu\in\operatorname{MS}(A)\}$.
Its first level is $(\mathcal K_E)_1=\SE$.
On the other hand,
$\mathcal L_1\subseteq\mathcal K_1
\subseteq K_{\phi_0}^{\Prob_f(\Gamma,A)}$, while
$K_{\phi_0}^{\Prob_f(\Gamma,A)}\cap\SE=\varnothing$.
Hence $\mathcal L\neq\mathcal K_E$.
By \cite{KS19}*{Theorem~7.8}, boundaries for
$(\operatorname{MS}(A),\Gamma)$ inside
$\operatorname{MS}(A\rtimes_r\Gamma)$ correspond bijectively
to $\Gamma$-equivariant pseudo-expectations for
$A\rtimes_r\Gamma$, the canonical boundary $\mathcal K_E$
corresponding to $E$.
Let $\Theta\colon A\rtimes_r\Gamma\to I_\Gamma(A)$,
$\Theta|_A=\id_A$, be the equivariant unital completely
positive map corresponding to $\mathcal L$.
Since $\mathcal L\neq\mathcal K_E$, injectivity of this
correspondence gives $\Theta\neq E$.
This contradicts uniqueness.
Therefore $A\rtimes_r\Gamma$ has GPAP.
\end{proof}
We now proceed
to prove Theorem~\ref{thm:mainlifting} from the introduction. When the coefficients of the averaging unitaries are commutative, the general principle as established in
\cite{AmrutamKalantar2020}*{Lemma~2.1} and
\cite{AmrutamUrsu2022}*{Lemma~4.1} is that an averaging procedure
which makes the nontrivial group unitaries small also makes every
element with zero canonical conditional expectation small. For the equivariant injective envelope, as we prove below, the averages
upstairs can still be taken with coefficients in $A$. We denote by $\mathsf S_{E_{I_\Gamma(A)}}$ the set $\{\omega\circ E_{I_\Gamma(A)}:\omega\in S(I_\Gamma(A))\}$.
\begin{theorem}
\label{thm:ncpap-full-envelope-lifting}
If $A\rtimes_r\Gamma$ has GPAP, then
$I_\Gamma(A)\rtimes_r\Gamma$ has relative GPAP with coefficients
in $A$. The averages can be chosen simultaneously on every
finite set. Moreover,
$\mathsf S_{E_{I_\Gamma(A)}}\subseteq K_{\varphi}^{\Prob_f(\Gamma,A)}$ for every
$\varphi\in S(I_\Gamma(A)\rtimes_r\Gamma)$.
\end{theorem}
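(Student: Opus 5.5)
The plan is to reduce everything to the orbit-intersection criterion of Proposition~\ref{prop:orbit-intersection}, applied with $B=I_\Gamma(A)$ and coefficients in $A$. If I can show $\mathsf S_{E_{I_\Gamma(A)}}\subseteq K_\varphi^{\Prob_f(\Gamma,A)}$ for every $\varphi\in S(I_\Gamma(A)\rtimes_r\Gamma)$, then condition (ii) of that proposition holds. The proposition then gives relative GPAP upstairs with coefficients in $A$, together with simultaneous averaging on finite sets. So the real content is the last assertion of the theorem. I would prove it in two stages: first produce \emph{some} point of $K_\varphi^{\Prob_f(\Gamma,A)}\cap\mathsf S_{E_{I_\Gamma(A)}}$, then show that the whole of $\mathsf S_{E_{I_\Gamma(A)}}$ is reached.

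\emph{Stage one.} Fix $\varphi$. By GPAP downstairs and Proposition~\ref{prop:orbit-intersection} with $B=A$, there is a single net $(\mu_\nu)$ in $\Prob_f(\Gamma,A)$ with $\|T_{\mu_\nu}(x)\|\to0$ for every $x\in\ker E_A$. In particular this holds for $x=a\lambda_s$ with $a\in A$ and $s\ne e$. Let $\psi$ be a weak$^*$ cluster point of $\varphi\mu_\nu$ on $I_\Gamma(A)\rtimes_r\Gamma$. Then $\psi|_{A\rtimes_r\Gamma}=\rho\circ E_A$ for $\rho=\psi|_A$.

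The difficulty is to control $\psi(b\lambda_s)$ for $b\in I_\Gamma(A)$ rather than $b\in A$. For this I would pass from states to equivariant maps using rigidity. Any $\Gamma$-equivariant u.c.p.\ map $\Phi\colon I_\Gamma(A)\rtimes_r\Gamma\to I_\Gamma(A)$ with $\Phi|_A=\id$ restricts to the identity on $I_\Gamma(A)$, by the rigidity recalled before Lemma~\ref{lem:envelope-gamma-simple}. Hence $I_\Gamma(A)$ lies in the multiplicative domain of $\Phi$, and $\Phi(b\lambda_s)=b\,\Phi(\lambda_s)$. Moreover, $\Phi|_{A\rtimes_r\Gamma}$ is a pseudo-expectation. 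GPAP downstairs forces $\Phi(\lambda_s)=0$ for $s\neq e$: equivariance and $A$-bimodularity give $T_\mu|_{I_\Gamma(A)}(\Phi(\lambda_s))=\Phi(T_\mu(\lambda_s))$, so $\|T_{\mu_\nu}(\Phi(\lambda_s))\|\to0$. I then need to conclude $\Phi(\lambda_s)=0$, e.g.\ via Lemma~\ref{lem:elementary-in-T} combined with $\Gamma$-simplicity of $I_\Gamma(A)$ (Lemma~\ref{lem:envelope-gamma-simple}). Granting this, every such $\Phi$ equals $E_{I_\Gamma(A)}$.

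To get back to states, I would take the net of equivariant u.c.p.\ maps $x\mapsto \sum$-type Poisson transforms built from $\varphi\mu$. Equivalently, I would mimic the matrix-convex boundary argument of Theorem~\ref{thm:unique-pseudo-expectation}, now over $I_\Gamma(A)\rtimes_r\Gamma$ with restriction to $\operatorname{MS}(A)$. The aim is to show that the orbit $K_\varphi^{\Prob_f(\Gamma,A)}$ must meet the canonical boundary, since the only boundary corresponds to $E_{I_\Gamma(A)}$. This yields $\psi\in K_\varphi^{\Prob_f(\Gamma,A)}\cap\mathsf S_{E_{I_\Gamma(A)}}$.

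\emph{Stage two.} Once $\omega\circ E_{I_\Gamma(A)}\in K_\varphi^{\Prob_f(\Gamma,A)}$ for one $\omega$, invariance gives $(\omega\circ T_\mu|_{I_\Gamma(A)})\circ E_{I_\Gamma(A)}\in K_\varphi^{\Prob_f(\Gamma,A)}$ for every $\mu\in\Prob_f(\Gamma,A)$. So it suffices to show that $F:=\{\omega\circ T_\mu|_{I_\Gamma(A)}:\mu\in\Prob_f(\Gamma,A)\}$ has weak$^*$ dense convex hull in $S(I_\Gamma(A))$. I would apply Lemma~\ref{lem:density} with $M=I_\Gamma(A)$, using that $I_\Gamma(A)$ is monotone complete (and may be treated as the lemma requires). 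The needed inequality is $\sup_\mu \omega(T_\mu(h))=\max\operatorname{sp}(h)$ for $h=h^*$. For this, use Lemma~\ref{lem:elementary-in-T}: averages $T_\mu$ with coefficients in $A$ push $A$ to scalars, and rigidity of $A\subseteq I_\Gamma(A)$ should transfer this to $I_\Gamma(A)$, so that $T_\mu(h)$ can be made close to $\sigma(h)1$ for a state $\sigma$ nearly attaining the maximum.

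I expect the main obstacle to be exactly this last transfer, together with the step $\Phi(\lambda_s)=0$. Both require turning averaging information about $A$ into information about $I_\Gamma(A)$. I would first attack it through the rigidity and essentiality of $A\subseteq I_\Gamma(A)$: a cluster point of the u.c.p.\ maps $T_{\mu_\lambda}|_{I_\Gamma(A)}$ that is scalar on $A$ should be forced to be scalar-valued in a controlled way on $I_\Gamma(A)$.
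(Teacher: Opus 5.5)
Your reduction to Proposition~\ref{prop:orbit-intersection} and your limit $\psi$ with $\psi|_{A\rtimes_r\Gamma}=\rho\circ E_A$ agree with the paper. You also correctly locate the difficulty in passing from $A$ to $I_\Gamma(A)$. But neither of your proposed mechanisms for that passage works as stated, so there is a genuine gap.

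\textbf{Stage one.} The fact that $\|T_{\mu_\nu}(\Phi(\lambda_s))\|\to0$ does not by itself give $\Phi(\lambda_s)=0$. Averages with coefficients in $A$ can annihilate nonzero elements: for $\mathbb Z_2$ flipping $A=\mathbb C^2=I_\Gamma(A)$, the average $((2^{-1/2},e),(2^{-1/2},g))$ kills $(1,-1)$. Moreover, Lemma~\ref{lem:elementary-in-T} only controls $T_\mu$ on $A$. Rigidity cannot transport scalar behaviour from $A$ to $I_\Gamma(A)$: it applies only to equivariant maps fixing $A$, and $T_{\mu_\lambda}|_{I_\Gamma(A)}$ and its cluster points are neither. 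For instance, with $A=\mathbb C$ the average $((1,s))$ is scalar on $A$, yet it acts on $I_\Gamma(\mathbb C)=C(\partial_F\Gamma)$ as a nontrivial automorphism whenever $s$ acts nontrivially on $\partial_F\Gamma$.

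There is also a circularity. Uniqueness of the equivariant conditional expectation upstairs is condition (iv) of Theorem~\ref{thm:main}, which the paper deduces \emph{from} this theorem. So your Stage one presupposes a consequence of what is being proved. Even granting it, you never carry out the return from maps to states; that would need an analogue of the boundary correspondence for $I_\Gamma(A)\rtimes_r\Gamma$ over $\operatorname{MS}(A)$, which you do not supply.

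\textbf{Stage two.} The inequality $\sup_\mu\omega(T_\mu(h))=\max\operatorname{sp}(h)$ for $h=h^*\in I_\Gamma(A)$ is asserted rather than proved.

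\textbf{The missing idea.} What you need is one concrete equivariant map that turns a single state of the orbit into the canonical expectation.
\begin{enumerate}
\item Apply Lemma~\ref{lem:elementary-in-T} to your $\psi$ with a pure $\rho$. This gives $\eta\in K_\varphi^{\Prob_f(\Gamma,A)}$ with $\eta|_{A\rtimes_r\Gamma}=\rho\circ E_A$.
\item Compress the GNS representation of $\eta$ to $\mathcal H_\rho=\overline{\pi_\eta(A)\Omega}$. This yields a u.c.p.\ map $Q$ with $Q|_A=\pi_\rho$ irreducible. Since $\eta(a^*\lambda_g b)=0$ for $a,b\in A$ and $g\neq e$, also $Q(\lambda_g)=0$.
\item Define $q(y)(t)=Q(\lambda_t^*y\lambda_t)$, mapping into $\ell^\infty(\Gamma,B(\mathcal H_\rho))$. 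It is equivariant and, by $\Gamma$-simplicity, injective on $A$.
\item By $\Gamma$-injectivity, $q(a)\mapsto a$ extends to some $P$. Rigidity gives $Pq|_{I_\Gamma(A)}=\id$, and the multiplicative domain together with $q(\lambda_g)=0$ gives $Pq=E_{I_\Gamma(A)}$. No uniqueness statement is needed: vanishing on $\lambda_g$ comes for free for this particular map.
\item Kadison transitivity yields unitaries $u\in A$ with $\pi_\rho(u)\Omega=\xi$. Hence each vector state $\omega_{t,\xi}\circ q$ equals $\eta\circ T_\mu$ with $\mu=((u^*,t^{-1}))$, and so lies in $K_\varphi^{\Prob_f(\Gamma,A)}$.
\item Apply Lemma~\ref{lem:density} in $\ell^\infty(\Gamma,B(\mathcal H_\rho))$, where its hypothesis is trivial, rather than in $I_\Gamma(A)$, where it is the hard part. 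This gives $\omega\circ E_{I_\Gamma(A)}=\omega\circ P\circ q\in K_\varphi^{\Prob_f(\Gamma,A)}$ for every $\omega\in S(I_\Gamma(A))$.
\end{enumerate}
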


\begin{proof}
Let us write $E=E_{I_\Gamma(A)}$ for ease of notation. Fix $y_1,\ldots,y_n\in I_\Gamma(A)\rtimes_r\Gamma$
and $\varepsilon>0$, and let $x_i=y_i-E(y_i)\in\ker E$. We need to find
$\mu\in\Prob_f(\Gamma,A)$ with $\|T_\mu(x_i)\|<\varepsilon$ for every $1\leq i\leq n$.

Now, let
$B=I_\Gamma(A)$. In view of Remark~\ref{rem:convex-semigroup} and Proposition~\ref{prop:orbit-intersection}, it suffices to show that
$\mathsf S_{E_{I_\Gamma(A)}}\subseteq K_{\varphi}^{\Prob_f(\Gamma,A)}$ for every
$\varphi\in S(I_\Gamma(A)\rtimes_r\Gamma)$.
Fix such a state $\varphi$. Since $A\rtimes_r\Gamma$ has GPAP, Proposition~\ref{prop:orbit-intersection},
applied for $A$, gives a net
$(\mu_\nu)_\nu$ in $\Prob_f(\Gamma,A)$ with $\|T_{\mu_\nu}(x)\|\to0$ for every
$x\in\ker E_A$. The same formulas define these maps on
$I_\Gamma(A)\rtimes_r\Gamma$. Passing to a subnet, let
$\eta_0=\lim_\nu\varphi\circ T_{\mu_\nu}$ in the weak$^*$ topology, so that
$\eta_0\in K_{\varphi}^{\Prob_f(\Gamma,A)}$. For $d\in A\rtimes_r\Gamma$ we have
$T_{\mu_\nu}(d)-T_{\mu_\nu}(E_A(d))=T_{\mu_\nu}(d-E_A(d))\to0$ in norm, and therefore
\[
 \eta_0(d)=\lim_\nu\varphi\bigl(T_{\mu_\nu}(E_A(d))\bigr)=\eta_0(E_A(d)).
\]
Thus $\eta_0|_{A\rtimes_r\Gamma}=\psi\circ E_A$, where $\psi=\eta_0|_A$.

Choose a pure state $\rho\in S(A)$. By
Lemma~\ref{lem:elementary-in-T}, there is a net $(\mu_\kappa)_\kappa$ in
$\Prob_f(\Gamma,A)$ with $\|T_{\mu_\kappa}(a)-\rho(a)1\|\to0$ for every
$a\in A$; since the coefficients of $\mu_\kappa$ lie in $A$, the maps
$T_{\mu_\kappa}$ are also defined on
$I_\Gamma(A)\rtimes_r\Gamma$. For $d\in A\rtimes_r\Gamma$,
compatibility with $E_A$ gives
$(\eta_0T_{\mu_\kappa})(d)=\psi\bigl(T_{\mu_\kappa}(E_A(d))\bigr)
 \longrightarrow\rho(E_A(d))$.
Each $\eta_0\circ T_{\mu_\kappa}$ lies in $K_{\varphi}^{\Prob_f(\Gamma,A)}$, which is
invariant under postcomposition by $T_\mu$ for $\mu\in\Prob_f(\Gamma,A)$. A weak$^*$ cluster
point $\eta$ of this net therefore belongs to $K_{\varphi}^{\Prob_f(\Gamma,A)}$
and satisfies $\eta|_{A\rtimes_r\Gamma}=\rho\circ E_A$.

Let $(\pi_\eta,\mathcal H,\Omega)$ be the GNS representation of $\eta$
and let $\mathcal H_\rho=\overline{\pi_\eta(A)\Omega}$. This subspace
reduces $\pi_\eta(A)$. Since $\eta|_A=\rho$, the restriction $\pi_\rho$
of $\pi_\eta|_A$ to $\mathcal H_\rho$, with cyclic vector $\Omega$, is
the GNS representation of $\rho$; hence it is irreducible. Let $p_\rho$
be the orthogonal projection onto $\mathcal H_\rho$ and define the
u.c.p.~map
\[
 Q\colon I_\Gamma(A)\rtimes_r\Gamma\longrightarrow B(\mathcal H_\rho),
~ Q(y)=p_\rho\pi_\eta(y)|_{\mathcal H_\rho},
\]
so that $Q|_A=\pi_\rho$. Since
$\eta|_{A\rtimes_r\Gamma}=\rho\circ E_A$, for $g\neq e$ and $a,b\in A$ we have
\[
 \begin{aligned}
 \bigl\langle Q(\lambda_g)\pi_\rho(b)\Omega,\pi_\rho(a)\Omega\bigr\rangle
 &=\eta(a^*\lambda_gb)\\
 &=\eta\bigl(a^*\alpha_g(b)\lambda_g\bigr)\\
 &=\rho\bigl(E_A(a^*\alpha_g(b)\lambda_g)\bigr)=0 .
 \end{aligned}
\]
Since $\pi_\rho(A)\Omega$ is dense in $\mathcal H_\rho$, it follows that
$Q(\lambda_g)=0$ for every $g\neq e$.

Equip $\ell^\infty(\Gamma,B(\mathcal H_\rho))$ with the left translation
action $(s\cdot m)(t)=m(s^{-1}t)$, and define
\[
 q\colon I_\Gamma(A)\rtimes_r\Gamma\longrightarrow
 \ell^\infty(\Gamma,B(\mathcal H_\rho)),
 ~ q(y)(t)=Q(\lambda_t^*y\lambda_t).
\]
The map $q$ is u.c.p. Since $\lambda_t^*\lambda_s=\lambda_{t^{-1}s}=\lambda_{s^{-1}t}^*$, it
follows that
\[
 q(\lambda_sy\lambda_s^*)(t)
 =Q(\lambda_{s^{-1}t}^*\,y\,\lambda_{s^{-1}t})=q(y)(s^{-1}t).
\]
Therefore, $q$ is equivariant with respect to the conjugation action.
For $a\in A$ we have $q(a)(t)=\pi_\rho(\alpha_{t^{-1}}(a))$, so $q|_A$ is
a unital $*$-homomorphism. Its kernel
$\bigcap_{t\in\Gamma}\ker(\pi_\rho\circ\alpha_{t^{-1}})$ is a proper
$\Gamma$-invariant ideal of $A$, hence zero by $\Gamma$-simplicity. Thus
$q(a)\mapsto a$ is a well-defined equivariant u.c.p.~map
$q(A)\to A\subseteq I_\Gamma(A)$. By $\Gamma$-injectivity of $I_\Gamma(A)$, this map extends to an
equivariant u.c.p.~map
$P\colon\ell^\infty(\Gamma,B(\mathcal H_\rho))\to I_\Gamma(A)$. The map
$(Pq)|_{I_\Gamma(A)}$ is equivariant, u.c.p., and fixes $A$ pointwise, so
equivariant rigidity gives $(Pq)|_{I_\Gamma(A)}=\id_{I_\Gamma(A)}$ (see
\cite{GU25}*{Theorem~2.8 and Definition~2.9}).

If $g\neq e$, then
$t^{-1}gt\neq e$ for every $t\in\Gamma$, so
$q(\lambda_g)(t)=Q(\lambda_{t^{-1}gt})=0$ and hence $q(\lambda_g)=0$.
Since $Pq$ fixes $I_\Gamma(A)$ pointwise, $I_\Gamma(A)$ lies in the
multiplicative domain of $Pq$, whence
$(Pq)(b\lambda_g)=b\,(Pq)(\lambda_g)=0$ for $b\in I_\Gamma(A)$ and
$g\neq e$. Consequently, it follows that $Pq=E$.

For $t\in\Gamma$ and a unit vector $\xi\in\mathcal H_\rho$, let
$\omega_{t,\xi}(m)=\langle m(t)\xi,\xi\rangle$. We claim that
$\omega_{t,\xi}\circ q\in K_{\varphi}^{\Prob_f(\Gamma,A)}$. Since $\pi_\rho$ is
irreducible and $\Omega,\xi$ are unit vectors in $\mathcal H_\rho$,
\cite{Murphy1990}*{Theorem~5.2.2},
together with the self-adjoint lifting argument in the proof of
Theorem~\ref{thm:unique-pseudo-expectation}, gives a unitary
$u\in A$ with $\pi_\rho(u)\Omega=\xi$. Hence, for
$y\in I_\Gamma(A)\rtimes_r\Gamma$,
\[
 (\omega_{t,\xi}\circ q)(y)
 =\bigl\langle\pi_\eta(\lambda_t^*y\lambda_t)\pi_\eta(u)\Omega,
 \pi_\eta(u)\Omega\bigr\rangle
 =\eta(u^*\lambda_t^*y\lambda_tu).
\]
The map $y\mapsto u^*\lambda_{t^{-1}}y\lambda_{t^{-1}}^*u$ is $T_\mu$ for
$\mu=((u^*,t^{-1}))\in\Prob_f(\Gamma,A)$, since $u^*u=1$. The claim now
follows from the fact that $\eta\in K_{\varphi}^{\Prob_f(\Gamma,A)}$ and its invariance.

For $m=m^*\in\ell^\infty(\Gamma,B(\mathcal H_\rho))$ we have $m\leq r1$
if and only if $\langle m(t)\xi,\xi\rangle\leq r$ for every $t\in\Gamma$
and every unit vector $\xi$. By Lemma~\ref{lem:density}, the convex hull
of the states $\omega_{t,\xi}$ is therefore weak$^*$ dense in
$S(\ell^\infty(\Gamma,B(\mathcal H_\rho)))$.

Finally, fix $\omega\in S(I_\Gamma(A))$. The state $\omega\circ P$ is a
weak$^*$ limit of convex combinations $c_\kappa$ of the
$\omega_{t,\xi}$. Composition with $q$ is weak$^*$ continuous, so
$c_\kappa\circ q\to\omega\circ P\circ q$; each $c_\kappa\circ q$ lies in
the convex weak$^*$ closed set $K_{\varphi}^{\Prob_f(\Gamma,A)}$.
Since $Pq=E$, we obtain
$\omega\circ E=\omega\circ P\circ q\in K_{\varphi}^{\Prob_f(\Gamma,A)}$.
Thus $\mathsf S_{E_{I_\Gamma(A)}}\subseteq K_{\varphi}^{\Prob_f(\Gamma,A)}$,
which proves the theorem.

\end{proof}
We now combine the preceding results to prove Theorem~\ref{thm:mainaveraging} from the introduction.
\begin{theorem}
\label{thm:main}
For a unital $\Gamma$-simple $\Gamma$-$C^*$-algebra $A$, the following
are equivalent:
\begin{enumerate}[label=\textup{(\roman*)},leftmargin=*]
\item $A\rtimes_r\Gamma$ has GPAP.
\item $I_\Gamma(A)\rtimes_r\Gamma$ has relative GPAP with coefficients in $A$.
\item $I_\Gamma(A)\rtimes_r\Gamma$ has GPAP.
\item $E_{I_\Gamma(A)}$ is the unique equivariant conditional expectation
$I_\Gamma(A)\rtimes_r\Gamma\to I_\Gamma(A)$.
\item The canonical map $A\rtimes_r\Gamma\to I_\Gamma(A)$ is the unique
equivariant pseudo-expectation.
\end{enumerate}
The norm-averaging statements can all be imposed simultaneously
on finite sets.
\end{theorem}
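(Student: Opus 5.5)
We will prove the cycle (i)$\Rightarrow$(ii)$\Rightarrow$(iii)$\Rightarrow$(iv)$\Rightarrow$(v)$\Rightarrow$(i). The implication (i)$\Rightarrow$(ii), together with the simultaneous form on finite sets, is exactly Theorem~\ref{thm:ncpap-full-envelope-lifting}. For (ii)$\Rightarrow$(iii) we use the inclusion $\Prob_f(\Gamma,A)\subseteq\Prob_f(\Gamma,I_\Gamma(A))$ from Definition~\ref{def:prob}: an infimum over the larger family is at most the infimum over the smaller one. The implication (v)$\Rightarrow$(i) is Theorem~\ref{thm:unique-pseudo-expectation}. Simultaneity on finite sets in each averaging statement follows from Lemma~\ref{lem:centered-finite-averaging}, in its $A$- or $B$-coefficient version as appropriate.

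For (iii)$\Rightarrow$(iv), let $F\colon I_\Gamma(A)\rtimes_r\Gamma\to I_\Gamma(A)$ be an equivariant conditional expectation, so $F$ is an $I_\Gamma(A)$-bimodule map. Fix $x\in\ker E_{I_\Gamma(A)}$ and $\mu=((b_j,s_j))\in\Prob_f(\Gamma,I_\Gamma(A))$. By bimodularity and equivariance,
\[
F(T_\mu(x))=\sum_j b_j\alpha_{s_j}(F(x))b_j^*=T_\mu(F(x)),
\]
where $T_\mu$ restricted to $I_\Gamma(A)$ is a u.c.p.\ map. The key step is to show $F(x)=0$, and this is the delicate point. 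The norm of $T_\mu(F(x))$ is not bounded below in general, since averaging can shrink elements of $I_\Gamma(A)$. We therefore work with the elements $x\lambda_g^*$, or more simply with $F(\lambda_g)$ for $g\neq e$. By bimodularity these determine $F$:
\[
F(b\lambda_g)=bF(\lambda_g).
\]
Set $z=F(\lambda_g)$. For $c\in I_\Gamma(A)$, bimodularity together with $\lambda_g c=\alpha_g(c)\lambda_g$ gives
\[
zc=F(\lambda_g c)=F(\alpha_g(c)\lambda_g)=\alpha_g(c)z.
\]
We plan to apply (iii) to the single element $x=z^*\lambda_g$, which lies in $\ker E_{I_\Gamma(A)}$. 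For $\mu=((b_j,s_j))$ we compute
\[
F(T_\mu(z^*\lambda_g))=\sum_j b_j\alpha_{s_j}\bigl(z^*F(\lambda_g)\bigr)b_j^*=\sum_j b_j\alpha_{s_j}(z^*z)b_j^*.
\]
The right-hand side is not forced to be large for a general family $\mu$. So instead of pursuing this route we fall back on the pseudo-expectation correspondence.

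The fallback is to prove (iv)$\Rightarrow$(v) and (iii)$\Rightarrow$(v) by extension. Given an equivariant pseudo-expectation $\Phi\colon A\rtimes_r\Gamma\to I_\Gamma(A)$, we use $\Gamma$-injectivity to extend it to an equivariant u.c.p.\ map $\tilde\Phi$ on $I_\Gamma(A)\rtimes_r\Gamma$. The restriction $\tilde\Phi|_{I_\Gamma(A)}$ is equivariant and fixes $A$ pointwise, so by rigidity it is the identity. Hence $\tilde\Phi$ is a conditional expectation, since $I_\Gamma(A)$ lies in its multiplicative domain. Under (iv) this forces $\tilde\Phi=E_{I_\Gamma(A)}$, and restricting to $A\rtimes_r\Gamma$ gives $\Phi=E_A$, which is (v).

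It remains to close the loop from (iii) back to (iv) or (v). Given (iii), we rerun the argument of Theorem~\ref{thm:unique-pseudo-expectation} in reverse: any equivariant conditional expectation $F$ satisfies $F\circ T_\mu=T_\mu\circ F$, so the states $\omega\circ F$ form an averaging-invariant closed set. By Proposition~\ref{prop:orbit-intersection}, every orbit closure meets $\SE$. For $y\in\ker E_{I_\Gamma(A)}$ and $\omega\in S(I_\Gamma(A))$ we then apply (iii) with coefficients in $I_\Gamma(A)$, choosing coefficients that push $F(y)$ toward a chosen pure-state evaluation, in the style of Lemma~\ref{lem:elementary-in-T} but upstairs. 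From this we aim to show that $\omega(F(y))$ is a limit of values $\omega'(F(T_\mu y))$, and these tend to $0$ because $\|T_\mu y\|\to0$. The obstacle is precisely to make these averages act on $F(y)$ like a state evaluation while still controlling $\|T_\mu y\|$. We would handle it by first averaging $y$ to small norm with a family $\nu$ given by (iii), and only then precomposing with Lemma~\ref{lem:elementary-in-T}-type maps. The orbit closure of $\omega\circ F$ then contains $\rho\circ F$ for every pure state $\rho$, while the vanishing of $\omega'(F(T_\nu y))$ is preserved, yielding $\rho(F(y))=0$ for all pure $\rho$, hence $F(y)=0$ and $F=E_{I_\Gamma(A)}$.
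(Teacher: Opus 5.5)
Your steps (i)$\Rightarrow$(ii), (ii)$\Rightarrow$(iii), (iv)$\Rightarrow$(v) and (v)$\Rightarrow$(i), and the finite-set remark, match the paper. The gap is in (iii)$\Rightarrow$(iv), which your cycle needs; the extension ``fallback'' only gives (iv)$\Rightarrow$(v), not (iii)$\Rightarrow$(v).

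In your final sketch you first fix $\nu$ with $\|T_\nu(y)\|$ small, and only afterwards bring in the elementary maps $\mu$. That order fails whichever way you compose.
\begin{itemize}
\item If $T_\mu$ is applied after $T_\nu$, you get $T_\mu T_\nu(F(y))\approx\rho(T_\nu(F(y)))1$. This only says that $\rho\circ T_\nu$ is small on $F(y)$, which already follows from $\|T_\nu(F(y))\|\le\|T_\nu(y)\|$. It says nothing about $\rho(F(y))$.
\item If $T_\mu$ is applied before $T_\nu$, the relevant quantity is $\|T_\nu(T_\mu(y))\|$. A $\nu$ chosen for $y$ does not control it, so your claim that ``the vanishing of $\omega'(F(T_\nu y))$ is preserved'' is unjustified.
\end{itemize}
Knowing that the orbit closure of $\omega\circ F$ contains every $\rho\circ F$ does not by itself give $\rho(F(y))=0$.

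The dependence must be reversed. Given $\rho$ and $\delta>0$, first choose $\mu$ with $\|T_\mu(F(y))-\rho(F(y))1\|<\delta$. Then apply (iii) to the new element $T_\mu(y)\in\ker E_{I_\Gamma(A)}$ to get $\nu$ with $\|T_\nu T_\mu(y)\|<\delta$. Since $T_\nu$ is unital and contractive and $F\circ T=T\circ F$,
$|\rho(F(y))|\le\|T_\nu T_\mu(F(y))\|+\delta=\|F(T_\nu T_\mu(y))\|+\delta<2\delta$.
Hence $F$ vanishes on $\ker E_{I_\Gamma(A)}$, so $F=E_{I_\Gamma(A)}$. (Alternatively, use the single net of Proposition~\ref{prop:orbit-intersection} that kills all of $\ker E_{I_\Gamma(A)}$ at once; you did not invoke it.)

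Even the corrected argument uses something you never mention. You need Lemma~\ref{lem:elementary-in-T} for states and elements of $I_\Gamma(A)$, not of $A$, since $F(y)\in I_\Gamma(A)$. This requires $I_\Gamma(A)$ itself to be $\Gamma$-simple, i.e.\ Lemma~\ref{lem:envelope-gamma-simple}(i). Some form of $\Gamma$-simplicity upstairs is essential.

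The paper uses it more directly, and you were one step away from that argument. You already observed that $\mathsf S_F=\{\omega\circ F\}$ is weak$^*$ closed and averaging-invariant, so by Proposition~\ref{prop:orbit-intersection} it meets $\mathsf S_{E_{I_\Gamma(A)}}$. Suppose $\rho\circ F=\sigma\circ E_{I_\Gamma(A)}$. Restricting to $I_\Gamma(A)$ gives $\rho=\sigma$, so $\rho$ annihilates the range of $F-E_{I_\Gamma(A)}$. By bimodularity and equivariance, the norm closure of that range is a $\Gamma$-invariant ideal of $I_\Gamma(A)$. If $F\neq E_{I_\Gamma(A)}$, this ideal is nonzero, hence all of $I_\Gamma(A)$ by $\Gamma$-simplicity, contradicting $\rho(1)=1$.
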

\begin{proof}
We prove the cycle
\textup{(i)}$\Rightarrow$\textup{(ii)}$\Rightarrow$\textup{(iii)}$\Rightarrow$\textup{(iv)}$\Rightarrow$\textup{(v)}$\Rightarrow$\textup{(i)}.
Theorem~\ref{thm:ncpap-full-envelope-lifting} proves
\textup{(i)}$\Rightarrow$\textup{(ii)}, and the inclusion
$A\subseteq I_\Gamma(A)$ gives
\textup{(ii)}$\Rightarrow$\textup{(iii)}.

\noindent\textup{(iii)}$\Rightarrow$\textup{(iv)}.
We use the range of the difference of two expectations to distinguish
their state orbits. The same argument works for any unital
$\Gamma$-simple coefficient algebra with GPAP.
Suppose, towards a contradiction, that
$P\colon I_\Gamma(A)\rtimes_r\Gamma\to I_\Gamma(A)$ is an equivariant conditional expectation
different from $E_{I_\Gamma(A)}$. Let 
$\mathcal R=(P-E_{I_\Gamma(A)})(I_\Gamma(A)\rtimes_r\Gamma),~J=\overline{\mathcal R}^{\|\cdot\|}\subseteq I_\Gamma(A)$.
 Both maps are $I_\Gamma(A)$-bimodular. Thus $\mathcal R$ is an $I_\Gamma(A)$-bimodule;
it is also linear, self-adjoint, and $\Gamma$-invariant.
Consequently, $J$ is a nonzero closed $\Gamma$-invariant two-sided
ideal of $I_\Gamma(A)$. Lemma~\ref{lem:envelope-gamma-simple} gives $J=I_\Gamma(A)$.

Consider the weak$^*$ compact sets 
${\mathsf S}_P=\{\rho\circ P:\rho\in S(I_\Gamma(A))\}$, and ${\mathsf S}_{E_{I_\Gamma(A)}}$.
 They are disjoint. Indeed, if $\rho \circ P=\sigma\circ E_{I_\Gamma(A)}$, restriction
to $I_\Gamma(A)$ gives $\rho=\sigma$. Hence $\rho$ vanishes on
$(P-E_{I_\Gamma(A)})(I_\Gamma(A)\rtimes_r\Gamma)$ and therefore on $J=I_\Gamma(A)$, contradicting
$\rho(1)=1$.

For $\mu\in\Prob_f(\Gamma,I_\Gamma(A))$, bimodularity and equivariance imply $P\circ T_\mu=(T_\mu|_{I_\Gamma(A)})\circ P.$ Therefore $\mathsf S_P$ is invariant under $I_\Gamma(A)$-coefficient averaging.
Fix $\rho\in S(I_\Gamma(A))$ and let $\varphi=\rho \circ P$. Then $K_{\varphi}^{\Prob_f(\Gamma,I_{\Gamma}(A))}\subseteq\mathsf S_P.$ This orbit closure cannot meet $\mathsf S_{E_{I_\Gamma(A)}}$,  contradicting
GPAP and Proposition~\ref{prop:orbit-intersection} with coefficients in $I_\Gamma(A)$.
The same argument applies verbatim with $\Prob_f(\Gamma,I_\Gamma(A))$
replaced by $\Prob_f(\Gamma,A)$, since $P\circ T_\mu=(T_\mu|_{I_\Gamma(A)})\circ P$
also holds for $\mu\in\Prob_f(\Gamma,A)$; thus \textup{(ii)} alone already
implies \textup{(iv)}.

\noindent\textup{(iv)}$\Leftrightarrow$\textup{(v)}.
We recall the correspondence from~\cite{GU25}*{Proposition~3.2}.
An equivariant pseudo-expectation $\Phi\colon A\rtimes_r\Gamma\to I_\Gamma(A)$ extends,
by equivariant injectivity, to an equivariant u.c.p.~map
$\widetilde\Phi\colon I_\Gamma(A)\rtimes_r\Gamma\to I_\Gamma(A)$.
Its restriction to $I_\Gamma(A)$ fixes $A$, so rigidity gives
$\widetilde\Phi|_{I_\Gamma(A)}=\id_{I_\Gamma(A)}$. It is therefore a conditional expectation.

Conversely, the restriction of any equivariant conditional
expectation is a pseudo-expectation. If two such conditional
expectations agree on $A\rtimes_r\Gamma$, they agree on every $\lambda_g$.
Their $I_\Gamma(A)$-bimodularity makes them agree on every $b\lambda_g$,
$b\in I_\Gamma(A)$, and density gives equality. This also proves uniqueness
of the extension.

\noindent\textup{(v)}$\Rightarrow$\textup{(i)}.
This follows from Theorem~\ref{thm:unique-pseudo-expectation} and
closes the cycle.

The finite-set assertions follow from
Lemma~\ref{lem:centered-finite-averaging}.
\end{proof}

\begin{remark}
\label{rem:downstairs-ce-distinction}
The equivalent conditions above imply that $E_A$ is the unique
equivariant conditional expectation from $A\rtimes_r\Gamma$ onto $A$.
The converse is false, even when $A=\mathbb C$.
Indeed, let $\Gamma$ be a countable group with trivial amenable radical
which is not $C^*$-simple, as in~\cite{LeBoudec2017}*{Theorem~C}.
Equivariant conditional expectations from $C_r^*(\Gamma)$ onto
$\mathbb C$ are exactly tracial states. There is a unique such
expectation by~\cite{BKKO17}*{Theorem~1.3}, whereas GPAP would
imply simplicity by Corollary~\ref{cor:ncpap-all-intermediates-simple}.
The codomain $I_\Gamma(A)$ in condition~\textup{(v)} is therefore
essential.
\end{remark}

\section{Simplicity}
\label{sec:simplicity}

The relative averages preserve every intermediate algebra and each of
its ideals. This gives the following application.

\begin{corollary}
\label{cor:ncpap-all-intermediates-simple}
Assume any of the equivalent conditions of Theorem~\ref{thm:main}.
Then every intermediate $C^*$-algebra $A\rtimes_r\Gamma\subseteq B\subseteq I_\Gamma(A)\rtimes_r\Gamma$ is simple. 
\end{corollary}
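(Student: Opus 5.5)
We use the relative GPAP upstairs, condition (ii) of Theorem~\ref{thm:main}, in the standard Powers-type argument. The key point is that the coefficients of the averaging families lie in $A\subseteq B$, so every intermediate algebra $B$ is invariant under the maps $T_\mu$.

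Fix an intermediate $C^*$-algebra $A\rtimes_r\Gamma\subseteq B\subseteq I_\Gamma(A)\rtimes_r\Gamma$ and a nonzero closed two-sided ideal $J\subseteq B$. The goal is to show $1\in J$.

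\textbf{Step 1: averaging preserves $J$.} For $\mu=((a_j,s_j))_{j=1}^m\in\Prob_f(\Gamma,A)$, the elements $a_j$ and $\lambda_{s_j}$ lie in $A\rtimes_r\Gamma\subseteq B$. Hence for $x\in J$ each term $a_j\lambda_{s_j}x\lambda_{s_j}^*a_j^*$ lies in $J$, and so $T_\mu(J)\subseteq J$.

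\textbf{Step 2: $J$ meets $I_\Gamma(A)$ in an element of norm one.} Choose $0\neq x\in J$ and replace it by $x^*x$, so that $x\geq0$ and $x\neq0$. Since $E:=E_{I_\Gamma(A)}$ is faithful, $E(x)\neq0$; normalize so that $\|E(x)\|=1$. By (ii) of Theorem~\ref{thm:main}, for $\varepsilon>0$ there is $\mu\in\Prob_f(\Gamma,A)$ with $\|T_\mu(x-E(x))\|<\varepsilon$. Then $T_\mu(x)\in J$, and by Remark~\ref{rem:convex-semigroup} we have $T_\mu(E(x))\in I_\Gamma(A)$, so $J$ contains an element within $\varepsilon$ of the positive element $T_\mu(E(x))$ of $I_\Gamma(A)$. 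The difficulty is that $T_\mu(E(x))$ might have small norm, or might lie in $I_\Gamma(A)$ but not in $B$.

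\textbf{Step 3: make $T_\mu(E(x))$ close to a positive scalar.} This is the step I expect to be the main obstacle. I would first try to replace $\mu$ by a composite $\nu*\mu$ whose first factor pushes $b:=E(x)$ towards a multiple of $1$, while the kernel part remains small because every $T_\nu$ is contractive and preserves $\ker E$.

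To make this concrete, use the last assertion of Theorem~\ref{thm:ncpap-full-envelope-lifting}: $\mathsf S_{E}\subseteq K_{\varphi}^{\Prob_f(\Gamma,A)}$ for every state $\varphi$. Taking $\varphi=\omega\circ E$ shows that the orbit closure of every state in $\mathsf S_E$ is all of $\mathsf S_E$. Restricted to $I_\Gamma(A)$, this says that $\{\omega\circ T_\mu|_{I_\Gamma(A)}\}$ is weak$^*$ dense in $S(I_\Gamma(A))$ for every $\omega$.

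A Hahn--Banach argument in the style of Proposition~\ref{prop:orbit-intersection} then turns this density into a norm statement. The conclusion should be that for a positive $b\in I_\Gamma(A)$ with $\|b\|=1$, there is $\nu$ with $T_\nu(b)\geq\delta1$ for some $\delta>0$. An alternative route is $\Gamma$-simplicity of $I_\Gamma(A)$ (Lemma~\ref{lem:envelope-gamma-simple}), which gives finitely many $c_k\in I_\Gamma(A)$ and $t_k\in\Gamma$ with $\sum_k c_k\alpha_{t_k}(b)c_k^*\geq1$. In that route the coefficients may lie outside $A$, so I would keep the first approach.

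\textbf{Step 4: conclude.} From Step 3, $J$ contains an element $y$ with $\|y-h\|<\delta/2$ for some $h\in I_\Gamma(A)$ satisfying $h\geq\delta1$. Then $y$ is invertible in $I_\Gamma(A)\rtimes_r\Gamma$. Since $C^*$-subalgebras are inverse-closed, $y$ is also invertible in $B$, so $1\in J$ and $J=B$.
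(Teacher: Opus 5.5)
\textbf{Verdict.} Your route is genuinely different from the paper's and can be made to work. However, Step 3, as you describe it, composes the averages in an order that fails, and its density-to-norm claim is only asserted.

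\textbf{The order of composition.} You apply the GPAP average $\mu$ first and $\nu$ after it, using contractivity of $T_\nu$ to keep the centered part small. But then $T_\nu$ acts on $T_\mu(b)$, not on $b$, and nothing in condition (ii) bounds $\|T_\mu(b)\|$ from below. So the $\delta$ you get depends on $\mu$, hence on $\varepsilon$, and you cannot arrange $\varepsilon<\delta/2$ as Step 4 needs. The quantity to preserve is the operator lower bound, not the smallness of the centered part, so reverse the order:
\begin{itemize}
\item First choose $\nu$ for $b=E(x)$ itself. This gives $T_\nu(b)\ge\delta1$ with $\delta$ depending only on $x$.
\item Since $E(T_\nu(x))=T_\nu(b)$ by Remark~\ref{rem:convex-semigroup}, apply (ii) to $T_\nu(x)$ with tolerance $\delta/2$. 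This gives $\mu$ with $\|T_\mu T_\nu(x)-T_\mu T_\nu(b)\|<\delta/2$.
\item The bound survives because $T_\mu$ is unital and positive: $T_\mu T_\nu(b)\ge\delta T_\mu(1)=\delta1$.
\item Hence $y=T_\mu T_\nu(x)\in J$ is self-adjoint with $y\ge\frac{\delta}{2}1$, and your Step 4 finishes.
\end{itemize}

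\textbf{The density-to-norm step.} The claim is true, and a covering argument proves it.
\begin{itemize}
\item Pick $\rho\in S(I_\Gamma(A))$ with $\rho(b)=1$. The density you derived gives, for each $\omega\in S(I_\Gamma(A))$, some $\nu_\omega$ with $\omega(T_{\nu_\omega}(b))>\frac12$.
\item The weak$^*$ open sets $\{\sigma:\sigma(T_{\nu_\omega}(b))>\frac12\}$ cover the compact set $S(I_\Gamma(A))$. Take a finite subcover indexed by $\omega_1,\dots,\omega_n$.
\item Let $\nu$ be the convex combination with $T_\nu=\frac1n\sum_iT_{\nu_{\omega_i}}$, as in Remark~\ref{rem:convex-semigroup}.
\item Each $T_{\nu_{\omega_i}}(b)$ is positive, so every state satisfies $\sigma(T_\nu(b))\ge\frac1{2n}$. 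Hence $T_\nu(b)\ge\frac1{2n}1$.
\end{itemize}

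\textbf{Comparison with the paper.} The paper argues dually and needs neither of these steps.
\begin{itemize}
\item For a proper ideal $J$, it takes a state $\varphi$ on $I_\Gamma(A)\rtimes_r\Gamma$ vanishing on $J$.
\item Because $T_\mu(J)\subseteq J$ (your Step 1), every state in $K_\varphi^{\Prob_f(\Gamma,A)}$ vanishes on $J$.
\item The last assertion of Theorem~\ref{thm:ncpap-full-envelope-lifting} puts every $\omega\circ E$ in that set. Hence $E(j^*j)=0$ for all $j\in J$, and faithfulness of $E$ gives $J=0$.
\end{itemize}
Both proofs rest on the same containment $\mathsf S_E\subseteq K_\varphi^{\Prob_f(\Gamma,A)}$. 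The paper uses it directly, while you convert it into a norm statement and then add relative GPAP and inverse-closedness. Your route is longer but gives more. For each nonzero positive $x\in J$ it produces a single explicit average $T$ with coefficients in $A$ such that $T(x)\ge\frac{\delta}{2}1$. Along the way it shows that $I_\Gamma(A)$ is $\Gamma$-simple with averaging coefficients in $A$, in the strong sense $T_\nu(b)\ge\delta1$. The state argument gives neither of these quantitative facts.
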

\begin{proof}
Let $J$ be a proper closed two-sided ideal of $B$.
Choose a state of the nonzero unital quotient $B/J$, pull it back
to $B$, and extend it to a state $\varphi\in S(I_\Gamma(A)\rtimes_r\Gamma)$.
Thus $\varphi(J)=0$.

For every $\mu\in\Prob_f(\Gamma,A)$, the map $T_\mu$ preserves $B$ and $J$, because its
multipliers $a_j\lambda_{t_j}$ belong to $A\rtimes_r\Gamma\subseteq B$.
In particular, $a_j\lambda_{t_j}J\lambda_{t_j}^*a_j^*\subseteq J.$ It follows that every state in $K_{\varphi}^{\Prob_f(\Gamma,A)}$ vanishes on $J$.
However, Theorem~\ref{thm:ncpap-full-envelope-lifting} gives
$\omega E_{I_\Gamma(A)}\in K_{\varphi}^{\Prob_f(\Gamma,A)}$ for every $\omega\in S(I_\Gamma(A))$.
Consequently, for $j\in J$, $\omega(E_{I_\Gamma(A)}(j^*j))=0~(\omega\in S(I_\Gamma(A))).$ States separate positive elements, so $E_{I_\Gamma(A)}(j^*j)=0$.
Faithfulness of $E_{I_\Gamma(A)}$ gives $j=0$. Therefore $J=0$ and $B$ is simple.
\end{proof}
\begin{remark}
\label{rem:hamana-bryder-simplicity}
The simplicity conclusion also follows from simplicity downstairs
alone. By \cite{Hamana1985}*{Theorem~3.4}, we have the canonical inclusions
$A\rtimes_r\Gamma\subseteq I_\Gamma(A)\rtimes_r\Gamma
\subseteq I(A\rtimes_r\Gamma)$; see also
\cite{GU25}*{Theorem~2.12}.
Every intermediate algebra between a simple unital algebra and its
ordinary injective envelope is simple (see also~\cite{Bryder2022Injective}*{Theorem~3.2}).
\end{remark}
The intersection property is equivalent to faithfulness of every
equivariant pseudo-expectation, by~\cite{KS19}*{Theorem~6.6};
see also~\cite{GU25}*{Proposition~3.3 and the discussion following it}.
For a $\Gamma$-simple coefficient algebra, the intersection
property is equivalent to simplicity of its reduced crossed product.
Theorem~\ref{thm:main}, on the other hand, concerns uniqueness.
The intersection-property criterion does not by itself give that
uniqueness. 

For the rest of this section we assume that
$A\rtimes_r\Gamma$ is simple, and we consider the two additional
conditions studied in \cite{KS19} and \cite{GU25}. The first one is about the action having
\emph{vanishing obstruction} in the sense of
\cite{KS19}*{Definition~8.3} (Condition~\textup{(V)}), and the second is about the group
$\Gamma$ being FC-hypercentral (Condition~\textup{(F)}).  Under this standing assumption, the action on $A$ has the intersection
property. By \cite{KS19}*{Theorem~5.5}, the action on $I_\Gamma(A)$ also
has the intersection property. Since $I_\Gamma(A)$ is $\Gamma$-simple
by Lemma~\ref{lem:envelope-gamma-simple}, it follows that
$I_\Gamma(A)\rtimes_r\Gamma$ is simple as well.

Recall that $\operatorname{FC}(\Gamma)$ is the normal subgroup of
elements with finite conjugacy class. We first record what these
classes detect directly.

\begin{lemma}
\label{lem:restored-finite-conjugacy}
Assume that $I_\Gamma(A)\rtimes_r\Gamma$ is simple.
If $P:I_\Gamma(A)\rtimes_r\Gamma\to I_\Gamma(A)$ is an
equivariant conditional expectation, then
$P(\lambda_s)=0$ for every
$s\in\operatorname{FC}(\Gamma)\setminus\{e\}$.
\end{lemma}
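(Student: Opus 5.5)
The plan is to show that $b:=P(\lambda_s)$ produces a central element of $I_\Gamma(A)\rtimes_r\Gamma$. Simplicity then forces that element to be a scalar, and the canonical expectation $E_{I_\Gamma(A)}$ forces the scalar to be $0$. The ingredients are the $I_\Gamma(A)$-bimodularity and $\Gamma$-equivariance of $P$, together with finiteness of the conjugacy class of $s$.

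\emph{Step 1: the commutation relation for $b$.} Fix $s\in\operatorname{FC}(\Gamma)\setminus\{e\}$ and put $b=P(\lambda_s)\in I_\Gamma(A)$. For $c\in I_\Gamma(A)$, bimodularity gives $P(\lambda_s c)=bc$. Covariance gives $\lambda_s c=\alpha_s(c)\lambda_s$, so also $P(\lambda_s c)=\alpha_s(c)b$. Hence
\[
bc=\alpha_s(c)\,b\qquad(c\in I_\Gamma(A)).
\]
Taking adjoints and replacing $c$ by $c^*$ yields $cb^*=b^*\alpha_s(c)$. It follows that $w:=b^*\lambda_s$ commutes with $I_\Gamma(A)$:
\[
c\,b^*\lambda_s=b^*\alpha_s(c)\lambda_s=b^*\lambda_s c .
\]
Equivariance gives $P(\lambda_g\lambda_s\lambda_g^*)=\alpha_g(b)$. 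In particular, if $g$ centralizes $s$ then $\alpha_g(b)=b$. The same relation holds for $b_t:=P(\lambda_{tst^{-1}})=\alpha_t(b)$ with $tst^{-1}$ in place of $s$, so each $b_t^*\lambda_{tst^{-1}}$ also commutes with $I_\Gamma(A)$.

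\emph{Step 2: the central element.} Let $t_1,\dots,t_k$ be representatives of $\Gamma/C_\Gamma(s)$. This set is finite because $s\in\operatorname{FC}(\Gamma)$. Define
\[
z:=\sum_{i=1}^k \alpha_{t_i}(b)^*\lambda_{t_ist_i^{-1}}.
\]
Each term depends only on the coset, by the centralizer invariance from Step 1. Hence $\lambda_g z\lambda_g^*$ merely permutes the terms, since $\lambda_g\alpha_{t}(b)^*\lambda_{tst^{-1}}\lambda_g^*=\alpha_{gt}(b)^*\lambda_{(gt)s(gt)^{-1}}$. So $z$ commutes with every $\lambda_g$, and by Step 1 it commutes with $I_\Gamma(A)$. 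Thus $z$ lies in the center of the simple unital $C^*$-algebra $I_\Gamma(A)\rtimes_r\Gamma$, and therefore $z=\zeta 1$ for some $\zeta\in\mathbb C$.

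\emph{Step 3: conclusion.} The elements $t_ist_i^{-1}$ are distinct and all differ from $e$. Therefore $\zeta=E_{I_\Gamma(A)}(z)=0$, so $z=0$. Reading off a Fourier coefficient, with $t_1$ chosen in $C_\Gamma(s)$, gives $b^*=E_{I_\Gamma(A)}(z\lambda_s^*)=0$. Hence $P(\lambda_s)=0$.

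The only delicate point is Step 2. One must check that $z$ is well defined (independent of the coset representatives) and genuinely $\operatorname{Ad}\lambda_g$-invariant. Both follow from $P(\lambda_{gsg^{-1}})=\alpha_g(P(\lambda_s))$; finiteness of the conjugacy class is used exactly to make $z$ a finite sum lying in the crossed product. Beyond this, the argument only uses the standard fact that the center of a simple unital $C^*$-algebra is $\mathbb C1$.
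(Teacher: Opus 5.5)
Your proof is correct and is the paper's argument: your $z$ is exactly $\sum_{t\in\mathcal C}P(\lambda_t)^*\lambda_t$ over the conjugacy class $\mathcal C$ of $s$, and you show it is central, hence a scalar, hence $0$ by applying $E_{I_\Gamma(A)}$, and then read off $P(\lambda_s)=0$ from the $\lambda_s$-coefficient. The only difference is that you verify centrality directly from the bimodularity and equivariance of $P$, whereas the paper cites Propositions~3.4 and~3.6 of Geffen--Ursu for this step.
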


\begin{proof}
Let $x_t=P(\lambda_t)$ and let $\mathcal C$ be the conjugacy
class of $s$. By~\cite{GU25}*{Propositions~3.4 and~3.6},
$\sum_{t\in\mathcal C}x_t^*\lambda_t$ is central in
$I_\Gamma(A)\rtimes_r\Gamma$. A simple unital
$C^*$-algebra has trivial centre, so this element is a scalar multiple
of $1$, and its canonical expectation is zero. Hence it is zero,
and comparison of Fourier coefficients gives $x_s=0$.
\end{proof}

For an FC-hypercentral group, the preceding vanishing can be
propagated along the FC-central series. The following argument appeared in an earlier version of
Geffen--Ursu~\cite{GU25}. We retain the argument here because we need uniqueness
of the equivariant pseudo-expectation.
\begin{proposition}
\label{prop:restored-fc-hypercentral}
Assume that $A\rtimes_r\Gamma$ is simple.
Condition~\textup{(F)} implies uniqueness of the equivariant
pseudo-expectation.
\end{proposition}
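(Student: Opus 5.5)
By Theorem~\ref{thm:main} (the equivalence (iv)$\Leftrightarrow$(v)), it suffices to show that every equivariant conditional expectation $P\colon I_\Gamma(A)\rtimes_r\Gamma\to I_\Gamma(A)$ equals $E=E_{I_\Gamma(A)}$. Since $P$ is $I_\Gamma(A)$-bimodular, it is determined by the coefficients $x_t:=P(\lambda_t)$, $t\in\Gamma$. Moreover $P(b\lambda_t)=bx_t$ for $b\in I_\Gamma(A)$, and density of the span of the elements $b\lambda_t$ gives $P=E$ once $x_t=0$ for all $t\neq e$.

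As observed before Lemma~\ref{lem:restored-finite-conjugacy}, simplicity of $A\rtimes_r\Gamma$ implies that $I_\Gamma(A)\rtimes_r\Gamma$ is simple, so that lemma is available.

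The plan is to propagate vanishing up the FC-central series
\[
\{e\}=\Gamma_0\trianglelefteq\Gamma_1\trianglelefteq\cdots\trianglelefteq\Gamma_\beta\trianglelefteq\cdots,
\]
where $\Gamma_{\beta+1}/\Gamma_\beta=\operatorname{FC}(\Gamma/\Gamma_\beta)$ and unions are taken at limit ordinals. Condition~\textup{(F)} says this series exhausts $\Gamma$. We prove by transfinite induction that $x_t=0$ for every $t\in\Gamma_\beta\setminus\{e\}$.
\begin{itemize}
\item[] \emph{Base and limit steps.} The step $\beta=1$ is exactly Lemma~\ref{lem:restored-finite-conjugacy}. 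Limit ordinals are trivial.
\item[] \emph{Successor step.} Assume $x_t=0$ on $\Gamma_\beta\setminus\{e\}$. Then $P$ restricted to $I_\Gamma(A)\rtimes_r\Gamma_\beta$ is the canonical expectation. We pass to the quotient group $\Gamma/\Gamma_\beta$ and run the Lemma~\ref{lem:restored-finite-conjugacy} argument for a coset class.
\end{itemize}

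For the successor step, take $s\in\Gamma_{\beta+1}\setminus\Gamma_\beta$. The conjugacy class of $s\Gamma_\beta$ in $\Gamma/\Gamma_\beta$ is finite, so the set $\mathcal C$ of elements of $\Gamma$ lying in cosets conjugate to $s\Gamma_\beta$ is a finite union of $\Gamma_\beta$-cosets, invariant under conjugation. Following \cite{GU25}*{Propositions~3.4 and~3.6}, I would try to show that the formal element
\[
z=\sum_{t\in\mathcal C}x_t^*\lambda_t
\]
defines a central element. The natural setting is not the $C^*$-algebra but, for instance, $I(A\rtimes_r\Gamma)$ or the normalizer/multiplier picture used by Geffen--Ursu. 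Centrality comes from equivariance, $x_{gtg^{-1}}=\alpha_g(x_t)$, together with bimodularity: $P(\lambda_t b)=\alpha_t(b)x_t$ and $P(b\lambda_t)=bx_t$ give $\alpha_t(b)x_t=x_tb$ after accounting for the relevant twisting. The induction hypothesis should be used to show that the $\Gamma_\beta$-directions contribute nothing, for example through relations of the form $x_{th}=x_t\alpha_t(x_h)$-type identities, or by applying $P$ to products $\lambda_t\lambda_h$ with $h\in\Gamma_\beta$. This makes the sum effectively finite modulo $\Gamma_\beta$, or at least norm-convergent in a suitable completion.

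Given centrality, we conclude as in Lemma~\ref{lem:restored-finite-conjugacy}. A simple unital algebra has trivial centre, and the same holds for the injective envelope, which is a prime (indeed simple-centred) algebra since $A\rtimes_r\Gamma$ is simple. So $z$ is a scalar whose $e$-coefficient vanishes, hence $z=0$, and comparing Fourier coefficients gives $x_s=0$.

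The main obstacle is this successor step. The class $\mathcal C$ is now an infinite union of cosets, so $z$ is not a priori a well-defined element of $I_\Gamma(A)\rtimes_r\Gamma$. One must exploit the induction hypothesis to show either that $x_t=0$ unless $t$ lies in a finite set of coset representatives, or that the element makes sense in a larger algebra with trivial centre. I would first try to derive, from bimodularity and the vanishing of $P$ on $\Gamma_\beta\setminus\{e\}$, a cocycle-type relation showing that $x_t$ on a coset $t\Gamma_\beta$ is supported at a single point.
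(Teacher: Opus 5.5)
\textbf{Gap: the successor step is not carried out.} Your reduction to equivariant conditional expectations $P$ on $I_\Gamma(A)\rtimes_r\Gamma$ matches the paper, as do the transfinite induction along the FC-central series and the base step via Lemma~\ref{lem:restored-finite-conjugacy}. The successor step, however, is left open. For $h\in\Gamma_{\beta+1}\setminus\Gamma_\beta$ the conjugacy class of $h$ in $\Gamma$ is infinite, since $h\notin\Gamma_1$. So $\sum_{t\in\mathcal C}x_t^*\lambda_t$ is not a priori a well-defined element of any algebra you control, and neither of your proposed repairs is substantiated. In particular, the hoped-for ``single-point support'' of $t\mapsto x_t$ on a coset $t\Gamma_\beta$ does not follow from the induction hypothesis. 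What the hypothesis actually yields is a Bessel-type inequality: $\sum_{u\in F}x_u^*x_u\le 1$ for every finite $F$ contained in one $\Gamma_\beta$-coset. That inequality, rather than a central element of the crossed product, is what must be exploited.

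\textbf{The missing argument.}
\begin{enumerate}
\item[(1)] Choose conjugates in one coset. The coset $h\Gamma_\beta$ has finite conjugacy class in $\Gamma/\Gamma_\beta$, while $h$ has infinite conjugacy class in $\Gamma$. Hence some single $\Gamma_\beta$-coset contains infinitely many conjugates of $h$. Pick distinct $h_i=g_ihg_i^{-1}$, $1\le i\le k$, in that coset.
\item[(2)] Apply complete positivity. Normality gives $h_ih_j^{-1}\in\Gamma_\beta\setminus\{e\}$ for $i\neq j$, so $P(\lambda_{h_ih_j^{-1}})=0$ by induction. Apply $P^{(k+1)}$ to $WW^*$ with $W=(1,\lambda_{h_1},\dots,\lambda_{h_k})^{\mathsf T}$. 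The result is a positive matrix with $1$'s on the diagonal, $a_i=x_{h_i}$ in the first column, $a_i^*$ in the first row, and zeros elsewhere.
\item[(3)] Compress. Compressing by $(-1,a_1,\dots,a_k)^{\mathsf T}$ gives $\sum_{i=1}^k a_i^*a_i\le 1$. Equivariance gives $a_i=\alpha_{g_i}(x_h)$, so $a_i^*a_i=\alpha_{g_i}(z_h)$ with $z_h=x_h^*x_h$.
\item[(4)] Centrality of $z_h$. The intertwining relation $x_hb=\alpha_h(b)x_h$, which you essentially wrote down, shows $z_h\in Z(I_\Gamma(A))$.
\item[(5)] A faithful invariant state. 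This centre is $\Gamma$-simple by Lemma~\ref{lem:envelope-gamma-simple}, so its spectrum is a minimal $\Gamma$-space. FC-hypercentral groups are amenable, so the centre carries a $\Gamma$-invariant state $\omega$, which is faithful by minimality.
\item[(6)] Conclude. Applying $\omega$ gives $k\,\omega(z_h)\le 1$ for every $k$. Hence $z_h=0$ and $x_h=0$.
\end{enumerate}
This is also the point where amenability, a consequence of~(F), is genuinely used; your outline never invokes it.
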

\begin{proof}
Let $(F_\xi)$ be the FC-central series of $\Gamma$, with
$F_0=\{e\}$, $F_{\xi+1}/F_\xi=\operatorname{FC}(\Gamma/F_\xi)$,
and $F_\eta=\bigcup_{\xi<\eta}F_\xi$ at limit ordinals.
Each $F_\xi$ is a normal subgroup of $\Gamma$.
Extend an equivariant pseudo-expectation to an equivariant
conditional expectation
$P:I_\Gamma(A)\rtimes_r\Gamma\to I_\Gamma(A)$, and write
$x_s=P(\lambda_s)$ and $z_s=x_s^*x_s$. The intertwining relation in \cite{GU25}*{Proposition~3.4} gives
$z_sb=x_s^*\alpha_s(b)x_s=bx_s^*x_s=bz_s$ for every $b\in I_\Gamma(A)$,
so $z_s\in Z(I_\Gamma(A))$.
The action on the spectrum of this center is minimal by
Lemma~\ref{lem:envelope-gamma-simple}. FC-hypercentral
groups are amenable, and amenability of $\Gamma$ therefore gives a
faithful invariant state $\omega$ on the center. Lemma~\ref{lem:restored-finite-conjugacy} gives $x_s=0$ on
$F_1\setminus\{e\}$. Suppose that $x_s=0$ on
$F_\xi\setminus\{e\}$, where $\xi\geq1$, and take
$h\in F_{\xi+1}\setminus F_\xi$. The conjugacy class of $h$
is infinite because $h\notin F_1$, but meets only finitely many
$F_\xi$-cosets because $hF_\xi$ has finite conjugacy class in
$\Gamma/F_\xi$. Hence one such coset contains infinitely many conjugates
of $h$. For any $k$, choose distinct conjugates
$h_i=g_ihg_i^{-1}$, $1\leq i\leq k$, in that coset.
Normality of $F_\xi$ gives $h_ih_j^{-1}\in F_\xi\setminus\{e\}$
for $i\neq j$, so $P(\lambda_{h_ih_j^{-1}})=0$ by the induction hypothesis.
Writing $a_i=x_{h_i}$ and
$W=(1,\lambda_{h_1},\ldots,\lambda_{h_k})^{\mathsf T}$,
complete positivity gives
\[
 P^{(k+1)}(WW^*)=
 \begin{pmatrix}
 1&a_1^*&\cdots&a_k^*\\
 a_1&1&&0\\
 \vdots&&\ddots&\\
 a_k&0&&1
 \end{pmatrix}\geq0.
\]
Multiplication on the right by $(-1,a_1,\ldots,a_k)^{\mathsf T}$
and on the left by its adjoint gives
$1-\sum_{i=1}^k a_i^*a_i\geq0$.
Equivariance gives $a_i=\alpha_{g_i}(x_h)$ and hence
$a_i^*a_i=\alpha_{g_i}(z_h)$. Applying the invariant state
$\omega$ yields $k\omega(z_h)\leq1$. Since $k$ is arbitrary
and $\omega$ is faithful, $z_h=0$ and hence $x_h=0$.
Limit stages follow by taking unions. Transfinite induction
therefore gives $x_s=0$ for every $s\neq e$, so $P$ is canonical.
\end{proof}
\begin{theorem}
\label{thm:restored-three-conditions}
Under the standing simplicity assumptions,
each of conditions~\textup{(F)} and~\textup{(V)}
implies all the equivalent conditions of Theorem~\ref{thm:main}.
In particular, averaging on $I_\Gamma(A)\rtimes_r\Gamma$ can
be performed with coefficients in $A$.
\end{theorem}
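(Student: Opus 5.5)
The plan is to reduce both conditions to condition~\textup{(v)} of Theorem~\ref{thm:main}, uniqueness of the equivariant pseudo-expectation. Once (v) holds, Theorem~\ref{thm:main} gives all the equivalent conditions. The implication (i)$\Rightarrow$(ii) of that theorem, i.e.\ Theorem~\ref{thm:ncpap-full-envelope-lifting}, then gives the final assertion: averaging on $I_\Gamma(A)\rtimes_r\Gamma$ can be done with coefficients in $A$.

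For condition~\textup{(F)} there is nothing new to do. Under the standing assumption that $A\rtimes_r\Gamma$ is simple, Proposition~\ref{prop:restored-fc-hypercentral} already gives uniqueness of the equivariant pseudo-expectation, which is (v).

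For condition~\textup{(V)} I would use the Kennedy--Schafhauser theory directly.
\begin{enumerate}[label=\textup{(\arabic*)},leftmargin=*]
\item Take an equivariant pseudo-expectation and extend it, as in the proof of (iv)$\Leftrightarrow$(v), to an equivariant conditional expectation $P\colon I_\Gamma(A)\rtimes_r\Gamma\to I_\Gamma(A)$. Write $x_s=P(\lambda_s)$.
\item By \cite{GU25}*{Proposition~3.4}, each $x_s$ satisfies the intertwining relation $x_s b=\alpha_s(b)x_s$ for $b\in I_\Gamma(A)$.
\item In \cite{KS19}*{Section~8}, such intertwiners are governed by the \emph{obstruction}, a cohomological (twisted) datum attached to elements $s$ for which $\alpha_s$ is implemented by a partial unitary of $I_\Gamma(A)$ on a central projection. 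Vanishing obstruction is exactly the condition under which \cite{KS19} shows that the intersection property forces every equivariant pseudo-expectation to be canonical. I would invoke the relevant result there (stated around \cite{KS19}*{Definition~8.3} and the theorem following it).
\item Combined with simplicity of $A\rtimes_r\Gamma$, which gives the intersection property for $A$ and, by \cite{KS19}*{Theorem~5.5}, also for $I_\Gamma(A)$, this yields $x_s=0$ for every $s\neq e$.
\item Then $I_\Gamma(A)$-bimodularity gives $P=E_{I_\Gamma(A)}$, which is (v).
\end{enumerate}

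The main obstacle is step~(3): locating the precise form of the \cite{KS19} statement and verifying it delivers uniqueness, not just faithfulness. If it only gives faithfulness, I would argue by hand, as in Lemma~\ref{lem:restored-finite-conjugacy}.
\begin{itemize}
\item The support projection of $z_s=x_s^*x_s$ is central. On that support, $x_s$ implements $\alpha_s$ as a partially inner automorphism.
\item Vanishing obstruction lets one rescale these partial implementers coherently over the conjugacy orbit of $s$, producing a nonzero element of the relative commutant.
\item Such an element would generate a proper invariant ideal, or a nontrivial centre, in the simple algebra $I_\Gamma(A)\rtimes_r\Gamma$, forcing $z_s=0$.
\end{itemize}
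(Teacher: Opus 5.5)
Your reduction to condition \textup{(v)} of Theorem~\ref{thm:main} matches the paper, as do your treatment of \textup{(F)} via Proposition~\ref{prop:restored-fc-hypercentral} and your derivation of the last assertion from Theorem~\ref{thm:ncpap-full-envelope-lifting}. The gap is the \textup{(V)} case, at steps (3)--(4). You never identify what vanishing obstruction actually does, so step (4) (``this yields $x_s=0$'') simply restates the desired conclusion. Your suspicion is right that the intersection property alone gives only faithfulness of every equivariant pseudo-expectation (\cite{KS19}*{Theorem~6.6}). The missing idea is \emph{proper outerness of the action on $I_\Gamma(A)$}, which is what bridges to uniqueness. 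The paper argues in three steps:
\begin{enumerate}
\item simplicity gives the intersection property;
\item under \textup{(V)}, \cite{KS19}*{Theorem~9.3} upgrades this to proper outerness of the action of $\Gamma$ on $I_\Gamma(A)$;
\item \cite{KS19}*{Theorem~6.4} shows that proper outerness forces the equivariant pseudo-expectation to be unique.
\end{enumerate}
In this route \textup{(V)} is used only in the second step, not directly on the intertwiners $x_s$.

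Your fallback does not close the gap. Grant that the polar part of $x_s$ partially implements $\alpha_s$ on a central projection. A nonzero element of a relative commutant still contradicts nothing. In the paper's own example, $v_\ell^*\lambda_{(e,\ell)}$ commutes with $A$, yet $A\rtimes_r\Gamma$ is simple. You need a \emph{central} element. The only construction you indicate is summing over the conjugacy class of $s$, as in Lemma~\ref{lem:restored-finite-conjugacy}. That produces an element of $I_\Gamma(A)\rtimes_r\Gamma$ only when the class is finite. The reason is that the coefficients $x_{gsg^{-1}}^*=\alpha_g(x_s)^*$ all have norm $\|x_s\|$, while Fourier coefficients of elements of the reduced crossed product tend to zero in norm. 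This is exactly why the \textup{(F)} case needs the transfinite positivity argument of Proposition~\ref{prop:restored-fc-hypercentral} beyond the FC-centre. It is also why the \textup{(V)} case goes through \cite{KS19} rather than an orbit sum. ``Rescale coherently'' is where \textup{(V)} would have to enter, and you supply no mechanism for it.
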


\begin{proof}
Condition~\textup{(F)} is covered by
Proposition~\ref{prop:restored-fc-hypercentral}. Under~\textup{(V)},
simplicity of $A\rtimes_r\Gamma$ gives the intersection property,
and \cite{KS19}*{Theorem~9.3} implies that the action on
$I_\Gamma(A)$ is properly outer. By
\cite{KS19}*{Theorem~6.4}, the equivariant pseudo-expectation is
unique. Theorem~\ref{thm:main} then gives the averaging conclusions.
Conversely, GPAP implies simplicity of $A\rtimes_r\Gamma$ by
Corollary~\ref{cor:ncpap-all-intermediates-simple}. Thus, under either
\textup{(F)} or \textup{(V)}, simplicity of $A\rtimes_r\Gamma$ is equivalent
to GPAP.
\end{proof}
We finish this section with an example for which both conditions \textup{(F)} and \textup{(V)} fail and yet GPAP holds.
\begin{example}
Let $A=M_2(\mathbb C)$, let $K=\mathbb Z_2^2$, and let
\[
 U=\begin{pmatrix}1&0\\0&-1\end{pmatrix},~
 V=\begin{pmatrix}0&1\\1&0\end{pmatrix},~
 v_{(r,s)}=U^rV^s.
\]
Since $UV=-VU$, the maps $\beta_k=\operatorname{Ad}v_k$ define
an action of $K$ on $A$. Let $\Gamma=\mathbb F_2\times K$ act
by $\alpha_{(h,k)}=\beta_k$. This is the action in
\cite{GU25}*{Example~8.4}, extended trivially over $\mathbb F_2$.
The algebra $A$ is simple, hence $\Gamma$-simple. Condition~\textup{(F)} fails since the FC-center of $\Gamma$ is
$\{e\}\times K$, and the quotient $\mathbb F_2$ has trivial
FC-center, so the FC-central series never reaches $\Gamma$.
Condition~\textup{(V)} also fails. Indeed, $I(A)=A$ and every
$\alpha_g$ is inner, so every projection $p_g$ of
\cite{KS19}*{Definition~8.3} equals $1$, and a partial
$*$-representation is then a unitary representation; vanishing
obstruction would therefore give a unitary representation implementing
the action. Its restriction
to $K$ would give commuting implementers of
$\operatorname{Ad}U$ and $\operatorname{Ad}V$. These must be
scalar multiples of $U$ and $V$, respectively, and therefore
anticommute, contradicting the fact that they commute. Hence
\textup{(V)} fails.

For $\ell=(r,s)$ and $k=(r',s')$, let
$\chi_\ell(k)=(-1)^{rs'-sr'}$. The unitaries
$v_\ell^*\lambda_{(e,\ell)}$ commute with $A$, and covariance gives
\[
 v_\ell^*\lambda_{(e,\ell)}\lambda_{(h,k)}
 \lambda_{(e,\ell)}^*v_\ell
 =\chi_\ell(k)\lambda_{(h,k)}.
\]
Since $\frac14\sum_{\ell\in K}\chi_\ell(k)=\delta_{k,e}$, the map
\[
 T_K(y)=\frac14\sum_{\ell\in K}
 v_\ell^*\lambda_{(e,\ell)}y\lambda_{(e,\ell)}^*v_\ell
\]
is $T_{\mu_K}$ for
$\mu_K=((v_\ell^*/2,(e,\ell)))_{\ell\in K}\in\Prob_f(\Gamma,A)$, and
$T_K(a\lambda_{(h,k)})=\delta_{k,e}a\lambda_{(h,e)}$ for $a\in A$.
Thus four terms kill the nontrivial $K$-coordinate.

Let $x=\sum_{h,k}a_{h,k}\lambda_{(h,k)}$ be a finite sum with
$E_A(x)=0$, and let $\varepsilon>0$. Then $a_{e,e}=0$ and
$T_K(x)=\sum_{h\neq e}a_{h,e}\lambda_{(h,e)}$.
The canonical copy of $C_r^*(\mathbb F_2)$ in $A\rtimes_r\Gamma$ is
isometric. By \cite{Powers1975}, there are $t_1,\ldots,t_m\in\mathbb F_2$
such that, for every $h\neq e$ occurring in $T_K(x)$,
\[
 \left\|\frac1m\sum_{j=1}^m
 \lambda_{(t_jht_j^{-1},e)}\right\|
 <\frac{\varepsilon}{1+\sum_{h\neq e}\|a_{h,e}\|}.
\]
Since $\mathbb F_2$ acts trivially on $A$, the composite average
\[
 \begin{aligned}
 T(y)&=\frac1m\sum_{j=1}^m
 \lambda_{(t_j,e)}T_K(y)\lambda_{(t_j,e)}^*\\
 &=\frac1{4m}\sum_{j=1}^m\sum_{\ell\in K}
 v_\ell^*\lambda_{(t_j,\ell)}y\lambda_{(t_j,\ell)}^*v_\ell.
 \end{aligned}
\]
It satisfies
\[
 \begin{aligned}
 \|T(x)\|
 &\leq\sum_{h\neq e}\|a_{h,e}\|
 \left\|\frac1m\sum_{j=1}^m\lambda_{(t_jht_j^{-1},e)}\right\|\\
 &\leq\frac{\varepsilon\sum_{h\neq e}\|a_{h,e}\|}
 {1+\sum_{h\neq e}\|a_{h,e}\|}<\varepsilon.
 \end{aligned}
\]
Its coefficients
$v_\ell^*/(2\sqrt m)$ lie in $A$ and satisfy
$\sum_{j,\ell}v_\ell^*v_\ell/(4m)=1$. Density and contractivity
give GPAP for $A\rtimes_r\Gamma$.
Theorem~\ref{thm:ncpap-full-envelope-lifting} gives relative
GPAP on $I_\Gamma(A)\rtimes_r\Gamma$ with coefficients in $A$,
and Corollary~\ref{cor:ncpap-all-intermediates-simple} gives
simplicity of every intermediate algebra.
\end{example}
 
\subsection*{Acknowledgements}
This work was completed during the second author's visit to the University of Victoria. We sincerely thank the University of Victoria for its warm hospitality during this period.
\bibliographystyle{amsalpha}
\bibliography{ncpap_final_2026-09-19}
\end{document}